\documentclass[runningheads]{llncs}
\usepackage[T1]{fontenc}
\usepackage{color}

\usepackage{amsmath,amssymb}
\usepackage{centernot}
\usepackage{hyperref}

\urlstyle{rm}

\usepackage{tikz}

\newcommand{\R}{\mathbb{R}}

\newcommand{\Z}{\mathbb{Z}}
\newcommand{\N}{\mathbb{N}}
\newcommand{\ID}{\mathrm{id}}

\begin{document}
\title{Symbol Frequencies in Surjective Cellular Automata}

\author{Benjamin Hellouin de Menibus\inst{1}\orcidID{0000-0001-5194-929X} \and Ville Salo\inst{2}\orcidID{0000-0002-2059-194X} \and Ilkka Törmä\inst{2}\orcidID{0000-0001-5541-8517}}

\institute{
Université Paris-Saclay, CNRS, Laboratoire Interdisciplinaire des Sciences du Numérique, Orsay, France,
\email{hellouin@lisn.fr}
\and
Department of Mathematics and Statistics, University of Turku, Turku, Finland, \email{\{vosalo,iatorm\}@utu.fi}}

\maketitle

\begin{abstract}
  We study the behavior of probability measures under iteration of a surjective cellular automaton.
  We solve the following question in the negative: if the initial measure is ergodic and has full support, do all weak-* limit points of the sequence of measures have full support as well?
  The initial measure of our solution is not a product measure, and in this case the question remains open.
  To this end, we present a tool for studying the frequencies of symbols in preimages of surjective cellular automata, and prove some basic results about it. 
  However, we show that by itself it is not enough to solve the stricter question in the positive.
\end{abstract}

\keywords{Cellular automata \and Surjective cellular automata \and Measures \and Iteration \and Frequency}

\section{Introduction}

We are interested in the asymptotic behavior of cellular automata (CA) iterated on a random initial configuration. Formally, we consider a CA $F$ on a state set $\Sigma$ and a shift invariant probability measure $\mu$ on $\Sigma^\Z$ -- such as the uniform measure $\lambda$ where each cell is chosen in an i.i.d.\ uniform manner -- and study the asymptotic properties of the sequence $(F^t \mu)_{t \in \N}$. This asymptotic behavior may be computationally very complex \cite{hellouin2013} and related properties are generally undecidable \cite{BoDePoSaTh15}. 

Surjective CA are better-behaved in this regard. Despite the fact that they are capable of universal computation in some settings, their behavior on probability measures has a much more combinatorial and dynamical structure. For example, every surjective CA keeps the uniform measure invariant, so in particular properties of $(F^t \lambda)_{t \in \N}$ are completely understood.
Another very important property is that the image of a measure under a surjective CA cannot have a lower measure-theoretical entropy. A survey of this structure can be found in \cite{kari2015}.

Let us provide examples of asymptotic behavior. It may happen that $F$ has a non-uniform invariant measure $\mu$, in which case $F^t \mu = \mu$ stays ``away'' from the uniform measure; this happens e.g. when $F = \ID$, or more generally when the CA has some conserved quantity \cite{KaTa12}. 
Other types of CA may exhibit \emph{randomization}, where for a large class of initial measures the sequence $(F^t \mu)_{t \in \N}$ tends, either directly or on average, toward the uniform measure.
As the latter is the measure of maximum entropy on $\Sigma^\Z$, the intuition is that as time goes on, $F^t \mu$ tends to a measure of maximum disorder.
Measure randomization has been studied extensively (see \cite{pivato2002,hellouin2020} among others).

Apart from specific classes of initial measures (uniform measures, Gibbs measures \cite{kari2015}), many questions remain open about the behavior of general surjective CA. These two families, as well as the fact that surjective CA cannot decrease entropy, are examples of phenomena where $(F^t \mu)_{t \in \N}$ cannot go ``further away'' from the uniform measure and that the identity CA is extremal in this sense. In this article, we provide new examples and counterexamples of this phenomenon.

Our main motivation is the following question, which is hinted at in~\cite[Section~7]{BoDePoSaTh15} and relayed to us explicitly by Guillaume Theyssier (personal communication). For a probability measure $\mu$ and a finite word $u$, we denote by $\mu([u])$ the probability that $u$ appears at position $0$.

\begin{question}
\label{q:FullSupportLimits}
Let $F$ be a surjective CA on $\Sigma^\Z$ and $\mu$ a product measure of full support, meaning $\mu([u]) > 0$ for all words $u$. Does $F^t\mu([u]) \centernot\longrightarrow 0$ hold for all $u$?
\end{question}


This article represents an attempt at resolving this question that ultimately failed, but produced interesting partial results and counterexamples. For example, the above intuition of not going further away from the uniform measure may suggest a stronger statement, that is, 
\begin{equation}\forall u\in\Sigma^n, |F\mu([u]) - \lambda([u])|\leq \max_{w\in\Sigma^n}|\mu([w]) - \lambda([w])|,\label{eq:faruniform}\end{equation}
where $\lambda([u]) = |\Sigma|^{-n}$.
We show that this is false (Theorem~\ref{thm:NotFullSupport}).
However, a failed attempt led us to introduce a value called \emph{correlation} for which the phenomenon holds (Theorems \ref{thm:OneDomination} and~\ref{thm:HighDomination}) and which does not seem to be a consequence of existing results, although we did not find a clear application.

Orthogonally to these results, we also show that Question~\ref{q:FullSupportLimits} is false if the measure $\mu$ is merely assumed to be ergodic instead of a product measure.
We construct an explicit counterexample in which the two-neighbor XOR automaton $F$ satisfies $F^t([1]) \longrightarrow 0$ (Theorem~\ref{thm:NotFullSupportXOR}).
We also provide an example of the XOR automaton giving a different limit behavior (Theorem~\ref{thm:XORall1}).



\section{Definitions}

\paragraph*{Cellular automata}
We consider one-dimensional cellular automata over a finite alphabet $\Sigma$.
A \emph{cellular automaton} (CA for short) is a function $F : \Sigma^\Z \to \Sigma^\Z$ defined by a \emph{local rule} $f : \Sigma^{1+r} \to \Sigma$ of some \emph{radius} $r \geq 0$ by $F(x)_i = f(x_{[i,i+r]})$ for all $x \in \Sigma^\Z$ and $i \in \Z$.
We may apply the CA $F$ to a word $w \in \Sigma^n$ of length $n \geq r+1$ to obtain a word $F(w) \in \Sigma^{n-r}$ by the formula $F(w)_i = f(w_{[i, i+r]})$.

The radius-$0$ identity CA is defined by $\ID(x) = x_0$. The radius-$1$ shift CA $\sigma$ is defined by $\sigma(x) = x_1$.
Note that we only consider cellular automata whose neighborhood extends to the right.
This is because the notions we study are invariant under composition with shift maps, and one-sided neighborhoods make our proofs somewhat less technical.

For a word $w \in \Sigma^*$ and $A \subset \Sigma$, we denote by $|w|$ its length and by $|w|_A$ the number of occurrences of $A$-symbols in $w$.
For single symbols $a \in \Sigma$, we write $|w|_a = |w|_{\{a\}}$. The \emph{cylinder} corresponding to $w$, denoted $[w]$, is the set of configurations $x\in\Sigma^\Z$ such that $x_{[0,|w|-1]} = w$.

\paragraph*{Surjectivity and probability measures}
Through the article, probability measures on $\Sigma^\Z$ are assumed to be $\sigma$-invariant. In this case a probability measure $\mu$ is entirely described by its value on cylinders $\mu([u])$, $u\in\Sigma^\ast$.

The simplest example is the uniform measure $\lambda$ where $\lambda([u]) = |\Sigma|^{-|u|}$.
In general, any function $\pi : \Sigma \to [0,1]$ with $\sum_{a \in \Sigma} \pi(a) = 1$ defines a product measure $\mu$ with $\mu([u]) = \prod_{i = 0}^{|u|-1} \pi(u_i)$.
When $\Sigma =  \{0,1\}$, the product measure with a density $p$ of $1$-symbols is denoted by $\mu_p$. It is also called the Bernoulli measure with parameter $p$.
For $a \in \Sigma$, the Dirac measure with support ${}^\infty a^\infty$, denoted $\delta_a$, is defined by $\delta_a([a^n]) = 1$ and $\delta_a([u]) = 0$ for all $u \in \Sigma^\ast \setminus a^\ast$.


The image under a CA $F$ of a probability measure $\mu$ is given by $F\mu([u]) = \mu(F^{-1}([u]))$ for any $u\in\Sigma^\ast$. We call $\nu$ the weak-* limit of the sequence $(F^t\mu)_{t\in\N}$ if $F^t\mu([u]) \longrightarrow \nu([u])$ for all words $u\in\Sigma^\ast$. The sequence $(F^t\mu)_{t\in\N}$ does not always admit a single weak-* limit, but it has limit points by compactness.

A surjective CA $F$ satisfies a \emph{balance property}: $|F^{-1}(w)| = |\Sigma|^r$ holds for all words $w \in \Sigma^\ast$. This implies that $F\lambda = \lambda$.

The measure-theoretical entropy of a shift-invariant probability measure $\mu$ is
\[ h(\mu) = - \lim_{n\to\infty} \sum_{w\in\Sigma^n} \mu([w])\log\mu([w]).\]
In particular, we have $h(\lambda) = \log(|\Sigma|)$, which is the highest possible entropy.
For any function $\pi : \Sigma \to [0,1]$ with $\sum_{a \in \Sigma} \pi(a) = 1$, the product measure defined by $\pi$ is the unique measure of maximum entropy among those measures $\mu$ with $\mu([a]) = \pi(a)$ for all $a \in \Sigma$.
For any CA $F$, entropy may not increase: $h(F\mu) \leq h(\mu)$.
If $F$ is surjective, it must preserve entropy: $h(F \mu) = h(\mu)$.

\section{Limit Measures not of Full Support}

We show that if one considers an arbitrary sequence of surjective CA, instead of the iterations of a single CA, the answer to Question~\ref{q:FullSupportLimits} is negative.

\begin{theorem}
  \label{thm:NotFullSupport}
  There exists $0 < p < 1$, a sequence $(F_n)_{n \in \N}$ of reversible binary cellular automata and a word $w\in\{0,1\}^\ast$ such that $F_n([w]) \longrightarrow 0$. In particular, no weak-* limit point of $(F_n \mu_p)_{n \in \N}$ has full support.
\end{theorem}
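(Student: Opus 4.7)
My plan is to realize the image measure $F_n \mu_p$ as nearly concentrated on a proper subshift. Since each $F_n$ is reversible we have $h(F_n \mu_p) = h(\mu_p)$, so the target subshift must have topological entropy at least $h(\mu_p)$. I would fix $p \in (0, 1/4)$ small enough that $h(\mu_{2p}) < \log \phi$, where $\phi = (1+\sqrt{5})/2$ is the golden ratio, and let $w = 11$. The target is the golden mean shift $X$, the subshift of $\{0,1\}^\Z$ in which the word $11$ never appears, whose topological entropy is $\log \phi$; by the choice of $p$ there is room for shift-invariant measures on $X$ of entropy $h(\mu_p)$.

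For each large $n$ I would construct a bijection $\Phi_n : \{0,1\}^n \to \{0,1\}^n$ mapping every word of Hamming weight at most $\lfloor 2pn \rfloor$ into the set $L_n(X)$ of length-$n$ words admissible in $X$. By Stirling the count of low-weight words has exponential rate at most $h(\mu_{2p})$, while $|L_n(X)|$ grows at rate $\log \phi$; since $h(\mu_{2p}) < \log \phi$ such an injection exists for $n$ large, and one extends it arbitrarily to a bijection of $\{0,1\}^n$. By the law of large numbers, the total $\mu_p$-mass of the low-weight words tends to $1$ as $n \to \infty$.

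The main technical step, and the chief obstacle, is to promote the block bijection $\Phi_n$ to a shift-invariant reversible CA $F_n$ on $\{0,1\}^\Z$ that effectively applies $\Phi_n$ to most length-$n$ windows of a $\mu_p$-typical configuration. A natural approach is a marker construction: identify a rare pattern of positive $\mu_p$-density, use its occurrences as block boundaries to partition the bi-infinite sequence, and apply an extension of $\Phi_n$ block-by-block while preserving the markers so that the rule admits a bounded-radius local inverse. The details are delicate: $F_n$ must be simultaneously shift-invariant, of bounded radius, genuinely reversible, and still force the image of a typical configuration to avoid $11$ almost everywhere. Granting such an $F_n$, a concentration argument gives $F_n \mu_p([11]) \to 0$, so no weak-* limit point of $(F_n \mu_p)_{n \in \N}$ assigns positive mass to $[11]$ and hence none has full support.
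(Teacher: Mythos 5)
Your high-level strategy is the same as the paper's: fix a small $p$, use an entropy/counting argument to inject the $\mu_p$-typical (low Hamming weight) blocks into an exponentially small target language, implement this recoding by a marker-based reversible CA, and conclude by concentration that the image measure nearly vanishes on some cylinder. However, the step you explicitly defer (``Granting such an $F_n$'') is the entire content of the theorem, and your sketch of it does not go through as stated. A block bijection $\Phi_n$ of $\{0,1\}^n$ does not by itself yield a shift-invariant, bounded-radius, reversible map: with a marker construction the blocks have variable length and the local rule must preserve the markers and be locally invertible. The paper resolves this by making $F_n$ an \emph{involution}: inside a ``medium''-length interval between consecutive occurrences of the marker $w_n=(10)^n0$, it swaps the content $v$ with $E_k(v)$ when $v\in A_k$ (typical low-weight words avoiding $w_n$), swaps back when $v\in E_k(A_k)$, and is the identity otherwise; then $F_n^{-1}=F_n$ gives reversibility for free, and the length cutoffs (short/medium/long) bound the radius. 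This trick needs the source and target block sets to be \emph{disjoint}, $A_k\cap B_k=\emptyset$, which the paper arranges by taking $B_k$ to consist of words built from blocks $110a_i$, whose $1$-density is at least $1/3$, far above the density $\approx p$ of words in $A_k$.

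This disjointness requirement is exactly where your specific choice of target breaks: low-weight words (e.g.\ $0^k$) are themselves admissible in the golden mean shift, so you cannot simply swap low-weight words with golden-mean words and act as the identity elsewhere --- the map would not be well defined, and ``extend arbitrarily to a bijection'' does not repair invertibility of the induced sliding-block map. You could salvage the idea by injecting into golden-mean words of atypically high weight, but you would still have to check the marker junctions (the next marker begins with $1$, so a block ending in $1$ creates an occurrence of $11$ in the image) and quantify the non-encoded intervals --- precisely the estimates (marker-occurrence bound for short intervals, Markov for long ones, Hoeffding for weight concentration) that the paper carries out and your proposal omits. Because the image is only \emph{mostly} in the target language (markers and rare unencoded intervals survive), the paper also has to move from $w=11$ to $w=1111$; as written, your claim that $F_n\mu_p([11])\to 0$ is not justified. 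So the proposal is a correct plan at the level of intention, but the construction and verification that constitute the proof are missing or flawed in the details.
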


\begin{proof}
  Take a small $p > 0$, and for $n \in \N$, let $w_n = (10)^n 0$ and $\alpha(n) = n/\mu_p(w_n) + |w_n|$.
  For each $k \geq 2/p$, define two subsets of $\{0,1\}^k$:
  \[
    A_k = \{ w \in \{0,1\}^k \mid kp/2 \leq |w|_1 \leq 3kp/2, w_n \text{~does not occur in~} w \}
  \]
\[B_k = \{ (110 a_1) (110 a_2) \cdots (110 a_{\lfloor k/4 \rfloor}) 0^{k - 4\lfloor k/4 \rfloor} \mid a_i \in \{0,1\} \}\]
  We also fix an injection $E_k : A_k \to B_k$.
  This is possible, since
  \[
    |A_k| \leq \sum_{m = pk/2}^{3pk/2} \binom{k}{m} \leq pk \binom{k}{3pk/2} \leq pk \cdot 2^{k H(3p/2)} \leq 2^{\lfloor k/4 \rfloor} = |B_k|
  \]
  as long as $p$ is small enough.
  Finally, we have $A_k \cap B_k = \emptyset$, because every $w \in B_k$ satisfies $|w|_1 \geq 2 \lfloor k/4 \rfloor \geq k/3 > 3kp/2$ as long as $p$ is small enough.
  
  We are now ready to define the CA $F_n$.
  It behaves as follows on a configuration $x \in \{0,1\}^\Z$.
  Let $I = \{ i \in \Z \mid x_{[i,i+|w_n|-1]} = w_n\}$ be the set of occurrences of $w_n$.
  For any two consecutive elements $i_1 < i_2 \in I$, the word $w_n v = x_{[i_1, i_2-1]}$ is called an \emph{$n$-interval}.
  Denote $k = i_2 - i_1$.
  This $n$-interval is \emph{short} if $k < 2n/p$, \emph{medium} if $2n/p \leq k \leq \alpha(n) + |w_n|$, and \emph{long} if $k > \alpha(n) + |w_n|$.
  \begin{itemize}
  \item If an interval is medium and $v \in A_k$, then $F_n$ replaces $v$ by $E_k(v) \in B_k$.
  Such intervals are called \emph{common}.
  \item If an interval is medium and $v \in E_k(A_k)$, then $F_n$ replaces $v$ by $E_k^{-1}(v) \in A_k$. 
  \item $F_n$ acts as the identity in any other situation.
  \end{itemize}
  Then $F_n$ is a well-defined CA with two-sided radius $\alpha(n) + |w_n|$.
  It cannot create or destroy occurrences of $w_n$.
  It satisfies $F_n^{-1} = F_n$, and is in particular reversible.

  We claim that $w=1111$ does not occur in the support of any weak-* limit point of $F_n \mu_p$, or equivalently, $(F_n \mu_p)([1111]) \longrightarrow 0$ as $n \longrightarrow \infty$.
  Namely, consider a configuration $x \in \{0,1\}^\Z$ with $F_n(x) \in [1111]$.
  Almost surely with respect to $\mu_p$, $x$ partitions $\Z$ into $n$-intervals of finite length.
  In any occurrence of $1111$, only the rightmost symbol can overlap an occurrence of $w_n$.
  Thus, the three leftmost symbols overlap the non-$w_n$ part of a $n$-interval of some length $k$.
  This interval cannot be common, as no word of $B_k$ contains more than two consecutive $1$-symbols.
  Thus, $(F_n \mu_p)([1111])$ is at most the probability that the $n$-interval that contains the origin is not common.

  An $n$-interval is not common if it is short, long, or medium but not in $A_k$.
  \begin{itemize}
  \item The probability of being short is at most the probability of seeing an occurrence of $w_n$ in a $\mu_p$-random word of length $2n/p$, which is upper-bounded by $2n/p \cdot \mu_p(w_n) = 2n p^{n+1} (1-p)^{n+1}$.
  \item Since the mean length of an $n$-interval is $1/\mu_p(w_n)$, by Markov's inequality the probability of being long is at most $1/n$.
  \item By Hoeffding's bound~\cite{Ho63}, the probability of a medium $n$-interval to not be common is at most $2 \exp(-pn)$.
  \end{itemize}
  Each of these tends to $0$ as $n \longrightarrow \infty$, hence $(F_n \mu_p)([1111])$ does as well.
  \qed
\end{proof}

Orthogonally to Theorem~\ref{thm:NotFullSupport}, we can also weaken Question~\ref{q:FullSupportLimits} by removing the condition that $\mu$ be a product measure.
We show that the answer becomes negative if we only assume ergodicity.

\begin{theorem}
\label{thm:NotFullSupportXOR}
Let $F$ be the two-neighbor XOR automaton.
There exists an ergodic full-support measure $\mu$ on $\{0,1\}^{\Z}$ such that $(F^n \mu)_{n \in \N}$ weak-* converges to the Dirac measure $\delta_0$.
\end{theorem}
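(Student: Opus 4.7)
The starting point is the XOR identity $F^{2^k}(x)_i = x_i \oplus x_{i+2^k}$, giving $F^{2^k}\mu([1]) = \mu(\{x : x_0 \neq x_{2^k}\})$; more generally $F^n(x)_0 = \bigoplus_{j \in S_n} x_j$ where $S_n = \{j : \binom{n}{j}\text{ is odd}\}$. Since $F^n\mu([u]) \leq F^n\mu([1])$ for any word $u$ containing a $1$ (by shift-invariance one may place the $1$ at position $0$) and $F^n\mu([0^k]) \geq 1 - k \cdot F^n\mu([1])$ by a union bound, the task reduces to producing an ergodic, full-support $\mu$ with $F^n \mu([1]) \to 0$.

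I would define $\mu$ as the law of
\[
x_i = \bigoplus_{k \geq 1} B_{k,\,i \bmod 2^k} \pmod 2,
\]
where the $B_{k,c}$ are independent Bernoulli$(p_k)$ random variables indexed by $k \geq 1$ and $c \in \Z/2^k\Z$. The sequence $p_k$ must balance two constraints: $\sum_k p_k\, 2^k = \infty$ (otherwise $\mu$ places positive mass on the all-zero configuration, precluding ergodicity) and $|S_n|\sum_{k > \log_2 n} p_k \to 0$ (for the XOR limit). The choice $p_k = 1/(k\, 2^k)$ achieves both.

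Shift-invariance is immediate from the cyclic symmetry of the index $c$. Full support holds because, for any $u \in \{0,1\}^n$ and $K = \lceil \log_2 n \rceil$, the indices $0, \ldots, n-1$ lie in distinct residues modulo $2^K$, so conditionally on contributions from scales $k \neq K$, the independent Bernoullis $B_{K,0}, \ldots, B_{K,n-1}$ realize the target pattern with positive conditional probability. For the convergence, regrouping
\[
F^n(x)_0 = \bigoplus_{k,c} B_{k,c}\, N_{k,c}(n) \pmod 2,\quad N_{k,c}(n) = |\{j \in S_n : j \equiv c \bmod 2^k\}|,
\]
and using Lucas's theorem: $S_n$ is an $\mathbb{F}_2$-affine subspace of size $2^{s_2(n)}$ (with $s_2$ the binary digit-sum) and its projection modulo $2^k$ has fibers of the even size $2^{s_2(\lfloor n/2^k\rfloor)}$ whenever $k \leq \lfloor \log_2 n \rfloor$. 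Thus only scales $k > \log_2 n$ survive, and a union bound gives $F^n\mu([1]) \leq |S_n|\sum_{k > \log_2 n} p_k = O(1/\log_2 n) \to 0$.

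The main obstacle is ergodicity. The natural pullback shift on the parameter space---independent cyclic shifts on each level $k$---is patently non-ergodic, with invariants such as $\{B_{1,0}=B_{1,1}\}$. The key observation is that such parameter-space invariants need not descend to measurable events on $\{0,1\}^\Z$: the combination $B_{1,0}\oplus B_{1,1}$ enters $x_0 \oplus x_1$ entangled with independent contributions from every higher scale, so it is not recoverable from $x$ alone. Proving that $\mu$ is nevertheless ergodic is the technical heart of the proof; I would approach it via a Kolmogorov tail-$\sigma$-algebra argument, using independence of the $B_{k,\cdot}$ across scales together with the summability $\sum_k p_k < \infty$ to show that every shift-invariant event in $\{0,1\}^\Z$ pulls back to a $\mu$-trivial event. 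If this last step fails for the clean construction above, I would modify it by coupling the scales to a common Haar-random element of $\Z_2$ (turning the parameter-space shift into an odometer) while retaining independent sparse perturbations at each level to preserve full support and the $F^n\mu([1]) \to 0$ estimate.
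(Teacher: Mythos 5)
Your reduction to $F^n\mu([1])\to 0$, the shift-invariance and full support of your measure, and the Lucas-theorem computation giving $F^n\mu([1])\le |S_n|\sum_{k>\log_2 n}p_k = O(1/\log_2 n)$ are all fine. The gap is exactly where you flag it -- ergodicity -- but the situation is worse than you hope: your ``clean'' construction is provably \emph{not} ergodic, so no tail-$\sigma$-algebra argument can rescue it. Condition on the level-$1$ variables and write $\mu=(1-p_1)^2\mu_{00}+p_1(1-p_1)(\mu_{01}+\mu_{10})+p_1^2\mu_{11}$, where $\mu_{ab}$ is the law of $x$ given $(B_{1,0},B_{1,1})=(a,b)$. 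Shifting $x$ corresponds to rotating each level cyclically, so $\sigma_*\mu_{ab}=\mu_{ba}$; hence $\mu_{00}$, $\mu_{11}$ and $\tfrac12(\mu_{01}+\mu_{10})$ are shift-invariant and $\mu$ is a nontrivial convex combination of them. But $\mu_{00}\neq\mu_{11}$: under $\mu_{00}$ the symbol at the origin is $\bigoplus_{k\ge 2}B_{k,0}$, which equals $1$ with probability $q=\tfrac12\bigl(1-\prod_{k\ge 2}(1-2p_k)\bigr)<\tfrac12$ because $\sum_k p_k<\infty$ and $p_k\le 1/8$ for $k\ge 2$, whereas under $\mu_{11}$ it equals $1$ with probability $1-q>\tfrac12$. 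Since an ergodic measure is an extreme point of the simplex of shift-invariant measures, $\mu$ is not ergodic. Note that this is structural rather than an artifact of the choice $p_k=1/(k2^k)$: your estimate forces $\sum_{k\ge m}p_k=o(2^{-m})$ (take $n=2^m-1$, so $|S_n|=2^m$), and it is precisely this sparseness that prevents the higher scales from washing out the rotation-invariant statistics of a fixed level, so the parameter-space invariants you hoped would not descend to $x$ in fact do.

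Your fallback -- tie the levels together into a single odometer and keep sparse independent resampling -- is indeed the right move, and it is essentially what the paper does: it samples a hierarchical block structure with block lengths $2^{n(n+1)/2}$, fills an $(n+1)$-block by copying one child to all the others with probability $p_n\to 1$ and resampling the children independently otherwise. The copying yields approximate local periodicity and hence $F^n\mu\to\delta_0$, the resampling yields full support, and ergodicity is proved by conditioning on the odometer coordinates and showing that for most relative shifts the two blocks being compared get resampled independently at some intermediate scale. That ergodicity argument (together with checking the construction is well-defined, i.e.\ independent of the anchor) is the actual content of the theorem, and it is absent from your sketch; as it stands the proposal does not prove the statement.
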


Note that without the ergodicity requirement, the result would be much easier.
For example, we could choose $\mu$ as the measure that picks $n$ according to some full-support distribution on $\N$, and then a uniformly random spatially $2^n$-periodic configuration.
Because $F^{2^n}(x) = {}^\infty 0^\infty$ when $x$ is spatially $2^n$-periodic, $F^n \mu$ indeed weak-* converges to $\delta_0$.

\begin{proof}
The idea is that we pick an odometer structure on $\Z$ by grouping it into intervals of length $2$, then grouping those into intervals of length $8$, and so on.
The lengths form the sequence $2^{t(n)}$ where $t(n) = n(n+1)/2$.
For each such interval we flip a biased coin, and with high probability (tending to $1$ as $n$ grows) constrain it to be locally $2^{t(n-1)}$-periodic.
After $2^{t(n-1)}$ applications of XOR, the configuration is mostly $0$s.

We define a measure $\nu$ on an auxiliary dynamical system $(X, T)$ and a measurable factor map $\phi : (X,T) \to (\{0,1\}^\Z, \sigma)$, and let $\mu = \phi \nu$.
The space $X$ is a subset of an infinite product of subshifts: $X \subset \{0,1\}^\Z \times \prod_{n \geq 1} X_n$, where each $X_n$ is a subshift over the alphabet $\Sigma_n = \{0, 1, \ldots, 2^{t(n)}-1\}$ defined as the orbit of the periodic point ${}^\infty 0 1 2 \cdots (2^{t(n)}-1)^\infty$.
Points of $X$ are denoted $x = (x^*, x^1, x^2, \ldots)$ where $x^* \in \{0,1\}^\Z$ and $x^n \in X_n$.
We require for all $n \in \N$ and $i \in \Z$ that $x^n_i \equiv x^{n+1}_i \bmod 2^{t(n)}$.
The dynamics $T$ is just the left shift on each component, and the factor map $\phi$ is the projection $\phi(x) = x^*$.

Let $(p_n)_{n \geq 1}$ be an increasing sequence with $0 < p_n < 1$, $p_n \longrightarrow 1$ and $\prod_n p_n = 0$, such as $p_n = 1 - 1/(n+1)$.
We define the measure $\nu$ on $X$ by describing how to sample a $\nu$-random configuration $x \in X$.
We construct the components $x^n$ sequentially.
When $x^1, \ldots, x^{n-1}$ have been fixed, there are $2^n$ choices for the configuration $x^n$.
We pick one of them with uniform probability.

For $n \geq 1$, each position $i \in \Z$ with $x^n_i = 0$ defines an \emph{$n$-block}, which is the interval $\{i, i+1, \ldots, i+2^{t(n)}-1\}$.
Each singleton $\{i\}$ is a $0$-block.
For $n \geq 1$, each level-$n$ block consists of $2^n$ consecutive $(n-1)$-blocks, called its \emph{children}.

To sample $x^*$, we choose some $0$-block $i \in \Z$ as the \emph{anchor} and pick $x^*_i$ to be $0$ or $1$ each with probability $1/2$.
Suppose then that we have picked the value of $x^*|_I$ for some $n$-block $I \subset \Z$.
Let $J$ be the $(n+1)$-block containing $I$, and let $K_1, \ldots, K_{2^n-1}$ be its children other than $I$.
Then, with probability $p_n$ we choose $x^*|_{K_i} = x^*|_I$ for all $i$, and with probability $1-p_n$ we pick each $x^*|_{K_i}$ by choosing the leftmost $0$-block of $K_i$ as a temporary anchor and repeating the above process with independent random choices.
With probability $1$, the entire configuration $x^*$ is eventually specified.
This concludes the definition of $\nu$.

We prove that the distribution of $\nu$ is independent of the choice of the anchor.
Let $i_1 \leq i_2$ be two anchors.
Almost surely the odometer components are such that $i_1$ and $i_2$ are contained in some $n$-block $I$.
We proceed by induction on the minimal such $n$.
For $n = 0$ we have $i_1 = i_2$ and the claim is trivial.
For $n \geq 1$, let $I_1 \neq I_2$ be the children of $I$ that contain $i_1$ and $i_2$, respectively.
The process that starts at $i_1$ samples $I_1$, which by the induction hypothesis has the same distribution as $I_2$ in the $i_2$-originated process.
With probability $p_n$, the $i_1$-process copies the contents of $I_1$ to every child of $I$, and with probability $1-p_n$ it samples the other children independently, and by the induction hypothesis, identically to the $i_2$-process.
The $i_2$-process behaves symmetrically, and we see that they produce identical distributions on $x^*|_I$.
After $x^*|_I$ has been sampled, both processes are identical, so the distributions over $x^*$ are identical.
It also follows that $\nu$ is $T$-invariant, and its image $\mu$ is shift-invariant.

The measure $\mu$ has full support: for any word $w \in \{0,1\}^{2^{t(n)}}$, there is a nonzero probability for $\{0, \ldots, 2^{t(n)}-1\}$ to be an $n$-block, none of its sub-blocks to be copied from their neighbors, and the randomly chosen bits to result in $w$.

We now prove that $F^n \mu \longrightarrow \delta_0$.
Recall that the two-neighbor XOR has the property that $F^{2^k}(x)_i \equiv x_i + x_{i+2^k} \bmod 2$ for all $k \geq 0$, $x \in \{0,1\}^\Z$ and $i \in \Z$.
Consider a $\nu$-random configuration $x = (x^*, x^1, x^2, \ldots)$ and let $n \geq 1$.
Let $I$ be the $(n+2)$-block containing the position $0$.
With probability at least $1 - 2^{-(n+1)} - 2^{-n(n+1)}$, the $n$-block containing $0$ is not among the $2^{n+1} + 2^n$ the rightmost $n$-blocks in $I$.
Independently, with probability at least $p_n p_{n+1}$, all $n$-blocks in $I$ have equal contents in $x^*$.
If both events happen, we have $x^*_i = x^*_{2^{t(n)}+i}$ for each $0 \leq i < 2^{t(n+1)}$, and hence $F^k(x^*) = 0$ for all $2^{t(n)} \leq k < 2^{t(n+1)}$.
Both probabilities tend to $1$ as $n$ grows, so $F^n \mu$ weak-* converges to $\delta_0$.

We show that $\nu$ is ergodic, which means that $\frac{1}{N} \sum_{k=0}^{N-1} \nu(A \cap T^{-k}(B))$ converges to $\nu(A) \nu(B)$ as $N$ grows, for all Borel sets $A, B \subseteq X$.
By a standard argument (based on the fact that for every Borel set $A$ and $\epsilon > 0$ there exists a finite union of cylinder sets $C$ with $\nu(A \mathop{\triangle} C) < \epsilon$) it suffices to prove this when $A$ and $B$ are cylinder sets with domain $D = \{0, 1, \ldots, 2^{t(n)}-1\}$ on the components $x^*$ and $x^1, \ldots, x^n$ for some $n \geq 0$.
For $0 \leq i < 2^{t(n)}$, let $P_i = \{ x \in X \mid x^n_0 = i \}$.
Since $A$ and $B$ are cylinder sets, we have $A \subseteq P_i$ and $B \subseteq P_j$ for some $i, j$.
We assume $i = j = 0$, the general case being similar.
Then we have $\nu(A \cap T^{-k}(B) \mid P_i) = 0$ whenever $i \neq 0$ or $k \not\equiv 0 \bmod 2^{t(n)}$, and $\nu(A \mid P_0) = 2^{t(n)} \nu(A)$ and analogously for $B$.

Let us condition on the event $P_0$.
Fix a large $M \geq n$ and let $0 \leq \ell < 2^{t(M) - t(n)}$ vary.
The domains of $A$ and $T^{-\ell 2^{t(n)}}(B)$ coincide with some $n$-blocks, and lie in some $M$-blocks (possibly the same one).
The sequence of directions (which child to choose) to reach the domain of $A$ from its $M$-block is given by the representation of $\ell_A$ in the mixed base $b = (2^{M-1}, 2^{M-2}, \ldots, 2^{n+1})$, where $\ell_A 2^{t(n)}$ is the distance from the left end of the $M$-block to the left end of $A$.
We define $\ell_B$ similarly for the domain of $T^{-\ell 2^n}(B)$.
Then $\ell_B - \ell_A \equiv \ell \mod 2^{t(M) - t(n)}$, so when $M$ is large compared to $n$, for a large fraction of $0 \leq \ell < 2^{t(M) - t(n)}$ the base-$b$ representations of $\ell_A$ and $\ell_B$ differ in at least $(M - n)/3$ digits; call this set $I \subset \{0, \ldots, M-n\}$.
With probability at least $1 - p_{n} p_{n+1} \cdots p_{n+(M-n)/3}$, which tends to $1$ as $M$ grows, for some $i \in I$, the children of the $(n+i)$-block containing $A$ are sampled independently.
Thus, regardless of whether the $(M+n)$-blocks that contain $A$ and $T^{-\ell 2^{t(n)}}(B)$ are the same, copies of each other, or independently sampled, for such an $\ell$ the $n$-blocks that make up the domains of $A$ and $T^{-\ell 2^n}(B)$ are sampled independently.
This implies $\nu(A \cap T^{-\ell 2^{t(n)}}(B) \mid P_0) = \nu(A \mid P_0) \nu(T^{-\ell 2^{t(n)}}(B) \mid P_0) = 2^{2 t(n)} \nu(A) \nu(B)$.

Altogether this implies that
\begin{align*}
{} & \frac{1}{2^{t(M)}} \sum_{k=0}^{2^{t(M)}-1} \nu(A \cap T^{-k}(B)) \\
{} = {} & \frac{1}{2^{t(M)}} \sum_{k=0}^{2^{t(M)}-1} \sum_{i=0}^{2^{t(n)}-1} \nu(A \cap T^{-k}(B) | P_i) \nu(P_i) \\
{} = {} & \frac{1}{2^{t(M)+t(n)}} \sum_{\ell=0}^{2^{t(M) - t(n)}-1} \nu(A \cap T^{-\ell 2^{t(n)}}(B) | P_0)
\end{align*}
is close to $\nu(A) \nu(B)$.
Hence $\nu$ is $T$-ergodic, and $\mu$ is shift-ergodic.
\qed
\end{proof}

Again somewhat orthogonally to both of the above theorems, we can produce a different limit behavior by iterating XOR on a single initial ergodic measure, at the cost of having to pass to a fixed subsequence.

\begin{theorem}
\label{thm:XORall1}
Let $F$ be the two-neighbor XOR automaton and $0 \leq \alpha \leq 1$.
There exists an ergodic full-support measure $\mu$ on $\{0,1\}^\Z$ such that the sequence $(F^{2^{n(n+1)/2}} \mu)_{n \in \N}$ weak-* converges to $\alpha \delta_0 + (1 - \alpha)\delta_1$.
\end{theorem}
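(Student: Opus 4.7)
The plan is to modify the construction from Theorem~\ref{thm:NotFullSupportXOR}, generalizing the ``copy'' step at level $n$ so that the siblings of the sampled $n$-block within its $(n+1)$-parent may be either identical copies (``$0$-type'') or alternating with its bitwise complement (``$1$-type''), with respective probabilities $\alpha$ and $1-\alpha$. The key local observation is that when the level-$n$ copy inside an $(n+1)$-block $J$ is of type $\tau \in \{0,1\}$, the identity $F^{2^{t(n)}}(x^*)_i = x^*_i \oplus x^*_{i+2^{t(n)}}$ forces $F^{2^{t(n)}}(x^*)_i = \tau$ for every $i \in J$ whose distance to the right end of $J$ is at least $2^{t(n)}$, since consecutive sibling $n$-blocks are either equal ($\tau=0$) or complementary ($\tau=1$).

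Concretely, when $x^*|_I$ has been sampled for the $k$-th child $I$ of its $(n+1)$-parent $J$, with probability $p_n$ I sample an independent sign $\tau \in \{0,1\}$ with $P(\tau = 0) = \alpha$ and, for every other child $K$ of $J$ at index $k'$, set $x^*|_K = x^*|_I$ if $\tau(k'-k)$ is even and $x^*|_K = \overline{x^*|_I}$ otherwise; with probability $1-p_n$ the siblings are sampled recursively and independently as in the original construction. Anchor-invariance requires an auxiliary symmetry: by induction on $n$, the marginal distribution on the content of each $n$-block is invariant under bitwise complementation (base case uniform; inductive step preserved by both copy branches since the distribution on the ``seed'' block is symmetric). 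Shift-invariance, full support, and the ergodicity argument of Theorem~\ref{thm:NotFullSupportXOR} carry over essentially verbatim, since $\tau$ is an independent coin at each copy site and the ``independent sampling'' branch, unaffected by $\tau$, is what supplies the decoupling used in the ergodicity proof.

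For the convergence, fix $k$ and $w \in \{0,1\}^k$ and analyze $F^{2^{t(n)}}(x^*)_{[0,k-1]}$. On the event that (i)~$[0,k-1]$ and $[2^{t(n)}, 2^{t(n)}+k-1]$ both lie in a single $(n+1)$-block $J$, and (ii)~the level-$n$ copy inside $J$ was performed with sign $\tau$, the prefix $F^{2^{t(n)}}(x^*)_{[0,k-1]}$ is the constant word $\tau^k$. Event (i) fails only when $0$ lies in one of the last $2^{t(n)} + k$ positions of $J$, an event of probability $O(2^{-n})$ by odometer uniformity, and event (ii) fails with probability $1-p_n = o(1)$. Since $P(\tau = 0) = \alpha$, this yields $F^{2^{t(n)}}\mu([0^k]) \to \alpha$, $F^{2^{t(n)}}\mu([1^k]) \to 1 - \alpha$, and $F^{2^{t(n)}}\mu([w]) \to 0$ for every $w \notin \{0^k, 1^k\}$, which is exactly weak-* convergence to $\alpha \delta_0 + (1-\alpha)\delta_1$.

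The main obstacle I anticipate is not conceptual but a matter of careful bookkeeping: one must verify that the ergodicity proof of Theorem~\ref{thm:NotFullSupportXOR} genuinely survives the extra dependence introduced by $\tau$ (i.e.\ that the ``independent sampling'' branch continues to decouple distant cylinders in the presence of sign-flipping copies elsewhere), and bound the boundary events in the convergence step uniformly in the cylinder length $k$. Both reduce to repeating the odometer and Markov-type estimates of Theorem~\ref{thm:NotFullSupportXOR} with the additional independent coin inserted at each copy site.
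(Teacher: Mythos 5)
Your proposal is correct and follows essentially the same route as the paper's proof: the same modification of the Theorem~\ref{thm:NotFullSupportXOR} construction (copy vs.\ complement-alternating children with probabilities $\alpha p_n$ and $(1-\alpha)p_n$, your independent coin $\tau$ being just a reparametrization), the same complement-invariance induction for anchor-independence, and the same XOR identity plus boundary/odometer estimate for the convergence. No substantive differences to report.
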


\begin{proof}
We use the same construction of the measures $\nu$ and $\mu$ as in the proof of Theorem~\ref{thm:NotFullSupportXOR}, but with the following modification.
When sampling $x^*$, suppose we have sampled an $n$-block $I$ containing the anchor, so that the contents of $x^*|_I$ are fixed, and are considering the $(n+1)$-block $J$.
With probability $1-p_n$, we sample each child of $J$ independently, as before.
With probability $\alpha p_n$, all children of $J$ are filled with $x^*|_I$.
With probability $(1-\alpha)p_n$, the children of $J$ alternate between $x^*|_I$ and its symbolwise negation.

This distribution is also independent of the position of the anchor; the proof is the same as in Theorem~\ref{thm:NotFullSupportXOR}, with the additional observation that the distribution of the contents of $n$-blocks in $x^*$ is invariant under cellwise negation.
This latter property can be easily proved by induction on $n$.
The proofs of the shift-invariance, ergodicity and full support of $\mu$ are then exactly as before.

As for the limit points, take a $\nu$-random configuration $x = (x^*, x^1, x^2, \ldots)$ and let $n \geq 1$.
Let $I$ be the $(n+1)$-block containing the origin.
With probability $1 - 2^{-n+1}$, the $n$-block containing the origin is not one of the two rightmost children of $I$.
Independently, with probability $\alpha p_n$, the children of $I$ have equal contents in $x^*$, and with probability $(1-\alpha)p_n$ they alternate between some word and its symbolwise negation.
As $n$ grows, the probability of $x^*_i = x^*_{2^{t(n)}+i}$, and hence $F^{2^{t(n)}}(x)_i = 0$, for all $0 \leq i < 2^{t(n)}$ tends to $\alpha$, and the probability of $F^{2^{t(n)}}(x)_i = 1$ for all $0 \leq i < 2^{t(n)}$ tends to $1-\alpha$.
This implies the claim.
\qed
\end{proof}

\section{Correlation} 
In this section, we introduce a notion called correlation and prove that, among surjective CA of a given radius $r$, the identity CA has the highest $(A,A)$-correlation of order 1 and of any high enough order.

Let us first provide some motivation. Suppose that you are trying to prove Equation~(\ref{eq:faruniform}) in the particular case where $\mu$ is a product measure. Without loss of generality, we consider $F\mu([b])$ where $b\in \Sigma$ is a single letter.

\begin{equation}\label{eq:correlationMotivation}F\mu([b]) = \mu(F^{-1}([b]) = \sum_{f(w) = b} \mu[w] = \sum_{f(w) = b} \prod_{a\in\Sigma}(\mu[a])^{|w|_A}\end{equation}

Thus the values $F\mu([b])$ depend of the number and repartition of occurrences of each letter $a$ in the pre-images of $b$ under $F$, that is, the values of $|w|_A$ for $w\in f^{-1}(b)$ (notice that, by the balance property, the number of such words does not depend on $b$), and we would be looking to compare this repartition with the case $F=\ID$. As a first step, we try to count the total number of occurrences in all preimages without considering the repartition: this is the intuition behind correlation, which is essentially an additive version of Equation~(\ref{eq:correlationMotivation}).

\subsection{Definition and basic properties}

\begin{definition}
\label{def:correlation}
  Let $F$ be a CA of radius at most $r$, and let $A, B \subseteq \Sigma$.
  For $0 \leq k \leq r+1$, we denote $N^A_B(F,r,k) = |\{ w \in \Sigma^{r+1} \mid F(w)_0 \in B, |w|_A = k \}|$.
  The \emph{$(A,B,r)$-histogram} of $F$ is the list $H^A_B(F,r) = (N^A_B(F,r,k))_{0 \leq k \leq r+1}$.
  
  The \emph{$(A,B,r)$-correlation of order $m$} of $F$ is the number \[C^A_B(F,r,m) = \sum_{k=0}^{r+1} k^m N^A_B(F,r,k).\]
  The $(A,B,r)$-correlation of order $1$ is simply called the $(A,B,r)$-correlation.
  
  We may drop $r$ from all of these definitions (for example, we may discuss the $(A,B)$-correlation and denote it by $C^A_B(F,1)$) if it is clear from the context.
\end{definition}

In other words, $N^A_B(F,r,k)$ counts the number of words that contain exactly $k$ symbols from $A$ and map to a symbol of $B$ under $f$.
The $(A,B,r)$-histogram lists these counts for each value $k \in \{0, \ldots, r+1\}$.
The $(A,B,r)$-correlation of order $m$, or $C^A_B(F,r,m)$, is the $m$'th moment of the $(A,B,r)$-histogram.

We also have the simpler formula
\begin{equation}
  \label{eq:correlation}
  C^A_B(F,r,m) = \sum_{w \in f^{-1}(B)} |w|_A^m
\end{equation}
where $f : \Sigma^{r+1} \to \Sigma$ is the radius-$r$ local function of $F$.
In particular, the $(A,B,r)$-correlation is the total number of $A$-symbols in all preimages of $B$-symbols, and (with the convention that $0^0 = 1$) the $(A,B,r)$-correlation of order $0$ is $C^A_B(F, r, 0) = |f^{-1}(B)|$.
If $F$ is a surjective CA, then its local function is balanced, so $C^A_B(F,r,0) = |B| \cdot |\Sigma|^r$.

\begin{example}
\label{ex:correlation1}
  Consider the radius-2 CA $F$ with state set $\Sigma = \{0,1,2\}$ defined by the local rule
  \[
    f(a,b,c) =
    \begin{cases}
      2, & \text{if~} b = c, \\
      1, & \text{if~} b \neq c \text{~and~} a = 0, \\
      0, & \text{otherwise.}
    \end{cases}
  \]
  For $A = \{0\}$ and $B = \{0,2\}$, the quantity $N^A_B(F,1)$ is the number of words $w \in \Sigma^3$ that satisfy $f(w) \in \{0,2\}$ and $|w|_0 = 1$.
  These words are exactly $011$, $022$, $101$, $102$, $110$, $120$, $201$, $202$, $210$, and $220$.
  Hence $N^A_B(F,1) = 10$.

  The full $(A,B)$-histogram of $F$ is $(8, 10, 2, 1)$.
  Its sum, $8 + 10 + 2 + 1 = 21$, equals the number of words in $f^{-1}(B)$.
  The first moment of the histogram, $0 \cdot 8 + 1 \cdot 10 + 2 \cdot 2 + 3 \cdot 1 = 17$, equals $C^A_B(F,1)$, the $(A,B)$-correlation of $F$.
\end{example}

The $(A,B)$-correlation of a CA is defined for a fixed radius.
We also present a normalized definition that is independent of the radius.



\begin{lemma}
    \label{lem:correlationRadius-m}
    Let $F$ be a radius-$r$ CA on $\Sigma^\Z$, $A, B \subseteq \Sigma$ and $m, k \geq 0$.
    Then
    \begin{equation}
        \label{eq:correlationRadius-m}
        C^A_B(F, r+k+1, m) = |\Sigma| \cdot C^A_B(F, r+k, m) + |A| \sum_{i=0}^{m-1}\binom{m}{i} C^A_B(F, r+k, i).
    \end{equation}
\end{lemma}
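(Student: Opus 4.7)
The plan is to work directly from the identity $C^A_B(F,r',m) = \sum_{w \in \Sigma^{r'+1},\, f(w_{[0,r]}) \in B} |w|_A^m$, which is the natural extension of Equation~(\ref{eq:correlation}) when we consider the radius-$r$ local rule $f$ as acting on longer neighborhoods (only the first $r+1$ letters matter). Setting $s = r+k$ and splitting each word $w \in \Sigma^{s+2}$ appearing in $C^A_B(F, s+1, m)$ as $w = ua$ with $u \in \Sigma^{s+1}$ and $a \in \Sigma$, the condition $f(w_{[0,r]}) \in B$ depends only on $u$ (because $s+1 > r$), so the sum factors cleanly as
\[
C^A_B(F, s+1, m) = \sum_{u \in \Sigma^{s+1},\, f(u_{[0,r]}) \in B} \sum_{a \in \Sigma} \bigl(|u|_A + \mathbb{1}[a \in A]\bigr)^m.
\]

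Next I would evaluate the inner sum by counting: exactly $|A|$ letters $a$ contribute $\mathbb{1}[a \in A] = 1$ and the remaining $|\Sigma|-|A|$ contribute $0$. This turns the inner sum into $(|\Sigma|-|A|)\,|u|_A^m + |A|(|u|_A + 1)^m$. Expanding $(|u|_A+1)^m$ by the binomial theorem and isolating the $i = m$ term gives
\[
(|\Sigma|-|A|)\,|u|_A^m + |A|\sum_{i=0}^{m} \binom{m}{i} |u|_A^i = |\Sigma|\,|u|_A^m + |A|\sum_{i=0}^{m-1}\binom{m}{i} |u|_A^i,
\]
where the $|A||u|_A^m$ from $i=m$ cancels the $-|A||u|_A^m$. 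This is purely algebraic and is the only nontrivial step.

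Finally I would substitute back and interchange summations: since each factor $|u|_A^i$ is summed over the same set $\{u \in \Sigma^{s+1} : f(u_{[0,r]}) \in B\}$ independent of $a$, the outer sum yields exactly $|\Sigma| \cdot C^A_B(F, s, m) + |A| \sum_{i=0}^{m-1} \binom{m}{i} C^A_B(F, s, i)$, which is Equation~(\ref{eq:correlationRadius-m}) after restoring $s = r+k$. There is no real obstacle beyond the bookkeeping of the cancellation in the binomial expansion; the proof is a direct computation relying only on the definition, with no appeal to surjectivity or any other property of $F$.
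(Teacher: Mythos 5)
Your proposal is correct and follows essentially the same route as the paper's proof: split each word of length $r+k+2$ as a length-$(r+k+1)$ prefix (on which the local rule's value depends) plus one extra letter on the right, evaluate the inner sum as $|A|(|u|_A+1)^m + (|\Sigma|-|A|)|u|_A^m$, and cancel the $i=m$ binomial term. Nothing is missing.
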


\begin{proof}
Let $f_k : \Sigma^{r+k+1} \to \Sigma$ be the radius-$(r+k)$ local rule of $F$.
From Equation~\eqref{eq:correlation} we compute
\begin{align*}
  C^A_B(F, r+k+1, m) = {} & \sum_{w \in f_k^{-1}(B)} \sum_{s \in \Sigma} |w s|_A^m \\
        {} = {} & \sum_{w \in f_k^{-1}(B)} |A|\cdot(1 + |w|_A)^m + |\Sigma\setminus A|\cdot |w|_A^m\\
        {} = {} & |\Sigma| \cdot C^A_B(F, r+k, m) + |A| \sum_{w \in f_k^{-1}(B)}\sum_{i=0}^{m-1} \binom{m}{i}|w|_A^i \\
        {} = {} & |\Sigma| \cdot C^A_B(F, r+k, m) + |A| \sum_{i=0}^{m-1}\binom{m}{i} C^A_B(F, r+k, i),
\end{align*}
which is exactly the claim.
\qed
\end{proof}

\begin{definition}
    Let $F$ be a CA on $\Sigma^\Z$ and $A, B \subseteq \Sigma$.
    We extend the definition of $C^A_B(F, r, m)$, the $(A,B,r)$-correlation of order $m$ of $F$, to all $m, r \geq 0$ as follows.
    For $r$ at least the radius of $F$, we use Definition~\ref{def:correlation}.
    For smaller $r$, we apply Equation~\ref{eq:correlationRadius-m} recursively, with $k$ negative.
    The \emph{normalized $(A,B)$-correlation} of order $m$ of $F$ is $\bar{C}^A_B(F,m) = C^A_B(F, 0, m)$.
    We denote $\bar{C}^A_B(F,1) = \bar{C}^A_B(F)$.
\end{definition}

By induction on $0 \leq k \leq r$ and using Equation~\ref{eq:correlationRadius-m}, we can prove that
\[
C^A_B(F, r-k, m) = \frac{C^A_B(F, r, m)}{|\Sigma|^k} - |A| \sum_{i=0}^{m-1} \binom{m}{i} \sum_{t=1}^k \frac{C^A_B(F,r-t,i)}{|\Sigma|^{k+1-t}}.
\]
With $m = 1$ and $k = r$, since $C^A_B(F, r-t, 0) = |f^{-1}(B)| / |\Sigma|^t$, this gives
\begin{equation}
    \label{eq:normalizedCorrelation}
    \bar{C}^A_B(F) = \frac{C^A_B(F, r, 1)}{|\Sigma|^r} - r \frac{|A| \cdot |f^{-1}(B)|}{|\Sigma|^{r+1}}.
\end{equation}
We could produce analogous formulas for $\bar{C}^A_B(F, m)$ with $m \geq 2$, expressed in terms of $C^A_B(F,r,m)$ and $\bar{C}^A_B(F,i)$ for $0 \leq i < m$, but they quickly become unwieldy.
Note that $\bar{C}^A_B(F)$ may be negative.

If $F$ is a radius-$r$ surjective CA, the balance property implies $|f^{-1}(B)| = |B| \cdot |\Sigma|^r$.
Then Equation~\ref{eq:normalizedCorrelation} becomes
\begin{equation}
\label{eq:normalizedCorrelationSurj}
\bar{C}^A_B(F) = \frac{C^A_B(F, r, 1)}{|\Sigma|^r} - r \frac{|A| \cdot |B|}{|\Sigma|}.
\end{equation}
In particular, for a fixed $r$ and surjective $F$ the conversion from $C^A_B(F, r, 1)$ to $\bar{C}^A_B(F)$ is given by a monotone function.

\begin{example}
    Let us continue Example~\ref{ex:correlation1}, where we computed $C^A_B(F,2,1) = 17$ and $|f^{-1}(B)| = 21$.
    Plugging into Equation~\ref{eq:normalizedCorrelation} these values and $|\Sigma| = 3$, $|A| = 1$ and $r = 2$, we get $\bar{C}^A_B(F) = 3^{-2} \cdot 17 - 2 \cdot 3^{-3} \cdot 1 \cdot 21 = 1/3$.
\end{example}

\begin{example}
    For non-surjective CA of radius $r \geq 1$, the conversion from $(A,B,r)$-correlation to normalized $(A,B)$-correlation may not be monotone.
    Let $\Sigma = \{0,1\}$ and $r = 2$, and let $F$ and $G$ be defined by the local functions
    \[
        f(w) =
        \begin{cases}
            1 & \text{if~} |w|_1 \leq 2 , \\
            0 & \text{otherwise}
        \end{cases}
        \qquad
        g(w) =
        \begin{cases}
            1 & \text{if~} |w|_1 = 2, \\
            0 & \text{otherwise}
        \end{cases}
    \]
    where $w \in \{0,1\}^3$.
    Now we have $C^1_1(F, 2, 1) = 9 > 6 = C^1_1(G, 2, 1)$ and $\bar{C}^1_1(F) = 1/2 < 3/4 = \bar{C}^1_1(G)$.
\end{example}

The following result can be seen as evidence that our definition of the normalized $(A,B)$-correlation is the ``correct'' way to remove the dependence on $r$ from the $(A,B,r)$-correlation.

\begin{proposition}
\label{prop:randomCA}
    Let $A, B \subseteq \Sigma$.
    For each $r \geq 0$, the average of the normalized $(A,B)$-correlation of order $1$ among all radius-$r$ CA on $\Sigma^\Z$ is $|A| \cdot |B| / |\Sigma|$.
\end{proposition}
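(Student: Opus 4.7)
The plan is to apply linearity of expectation directly to the explicit formula~\eqref{eq:normalizedCorrelation} for $\bar{C}^A_B(F)$. A radius-$r$ CA on $\Sigma^\Z$ is in bijection with its local rule $f : \Sigma^{r+1} \to \Sigma$, so the uniform distribution over such CA corresponds to sampling the values $f(w)$ uniformly from $\Sigma$ for each $w \in \Sigma^{r+1}$. Under this distribution, each $w \in \Sigma^{r+1}$ belongs to $f^{-1}(B)$ with probability $|B|/|\Sigma|$.

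First I would compute $\mathbb{E}[C^A_B(F,r,1)]$ via formula~\eqref{eq:correlation}. By linearity,
\[
  \mathbb{E}[C^A_B(F,r,1)] = \sum_{w \in \Sigma^{r+1}} |w|_A \cdot \Pr[f(w) \in B] = \frac{|B|}{|\Sigma|} \sum_{w \in \Sigma^{r+1}} |w|_A,
\]
and a double count of the pairs $(w,i)$ with $w_i \in A$ gives $\sum_{w \in \Sigma^{r+1}} |w|_A = (r+1)\,|A|\,|\Sigma|^r$. The same argument yields $\mathbb{E}[|f^{-1}(B)|] = |B|\,|\Sigma|^r$.

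Substituting these expectations into Equation~\eqref{eq:normalizedCorrelation} I obtain
\[
  \mathbb{E}[\bar{C}^A_B(F)] = \frac{(r+1)\,|A|\,|B|\,|\Sigma|^{r-1}}{|\Sigma|^r} - r \cdot \frac{|A| \cdot |B|\,|\Sigma|^r}{|\Sigma|^{r+1}} = \bigl((r+1) - r\bigr) \cdot \frac{|A|\,|B|}{|\Sigma|} = \frac{|A|\,|B|}{|\Sigma|},
\]
as claimed. There is no real obstacle beyond bookkeeping; the calculation confirms that the correction term $-r\,|A|\,|f^{-1}(B)|/|\Sigma|^{r+1}$ in~\eqref{eq:normalizedCorrelation} precisely cancels the $r$-dependence of $\mathbb{E}[C^A_B(F,r,1)]/|\Sigma|^r$, which is the expected behavior of the ``normalized'' version. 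Note that the argument uses Equation~\eqref{eq:normalizedCorrelation} in its full generality (not assuming surjectivity), which is consistent with the proposition averaging over all radius-$r$ CA rather than over the surjective ones.
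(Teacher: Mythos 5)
Your proposal is correct and follows essentially the same route as the paper: both arguments average Equation~\eqref{eq:normalizedCorrelation} over all radius-$r$ local rules, using that each $w \in \Sigma^{r+1}$ lands in $f^{-1}(B)$ with probability $|B|/|\Sigma|$ and that $\sum_{w \in \Sigma^{r+1}} |w|_A = (r+1)|A||\Sigma|^r$. The only cosmetic difference is that the paper carries the indicator $I_f(w)$ inside a single sum, whereas you compute the expectations of $C^A_B(F,r,1)$ and $|f^{-1}(B)|$ separately before substituting.
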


\begin{proof}
    For a radius-$r$ CA $F$ with local rule $f$, let $I_f$ be the indicator function of $f^{-1}(B)$, that is, $I_f(w) = 1$ if $w \in f^{-1}(B)$ and $I_f(w) = 0$ otherwise for all $w \in \Sigma^{r+1}$.
    Then $|f^{-1}(B)| = \sum_{w \in \Sigma^{r+1}} I_f(w)$.
    From Equation~\ref{eq:correlation} we have
    \[
    C^A_B(F,r,1) = \sum_{w \in f^{-1}(B)} |w|_A = \sum_{w \in \Sigma^{r+1}} I_f(w) \cdot |w|_A.
    \]
    Combining these with Equation~\ref{eq:normalizedCorrelation}, we have
    \[
    \bar{C}^A_B(F) = \sum_{w \in \Sigma^{r+1}} I_f(w) \left( |\Sigma|^{-r} |w|_A - r |\Sigma|^{-r-1} |A| \right).
    \]
    For each $w$, the average of $I_f(w)$ over all $f$ is $|B|/|\Sigma|$.
    The average of $\bar{C}^A_B(F)$ over all $f$ is thus
    \[
    \sum_{w \in \Sigma^{r+1}} \frac{|B|}{|\Sigma|}\left( |\Sigma|^{-r} |w|_A - r |\Sigma|^{-r-1} |A| \right) = |B|(r+1)\frac{|A|}{|\Sigma|} - r \frac{|A| \cdot |B|}{|\Sigma|} = \frac{|A| \cdot |B|}{|\Sigma|},
    \]
    as claimed.
    \qed
\end{proof}

\subsection{Upper Bound for Order 1}

In this section we show that the identity CA has the highest normalized $(A,A)$-correlation among all surjective CA. A similar computation would yield that the minimum is reached by any symbol permutation $\tau$ that minimizes $|\tau(A) \cap A|$. When $A\cap B=\emptyset$ and $|A|=|B|$, the identity has the lowest $(A,B)$-correlation, while the highest is reached by any symbol permutation $\tau$ with $\tau(A) = B$.

A direct computation gives $\bar{C}^A_A(\ID) = |A|$.
As expected, it is higher than $|A|^2 / |\Sigma|$, the average of the normalized $(A,A)$-correlation of a randomly chosen CA as per Proposition~\ref{prop:randomCA}.
Equation~\ref{eq:normalizedCorrelationSurj} now gives for all $r \geq 0$ that
\begin{equation}
\label{eq:id}
C^A_A(\ID,r,1) = (|\Sigma| + r |A|) |A| \cdot |\Sigma|^{r-1}.
\end{equation}

\begin{lemma}
\label{lem:WeightSquareSum}
  For each integer $n \geq 0$ and real $a \geq 0$, we have
  \[
    \sum_{k=0}^n k^2 a^k \binom{n}{k} = n a (n a + 1) (a + 1)^{n-2}.
  \]
\end{lemma}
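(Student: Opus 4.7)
The plan is to prove the identity by applying the operator $a \frac{d}{da}$ twice to the binomial expansion, which is a classical technique for converting moments $\sum k^m a^k \binom{n}{k}$ into closed form.

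First I would start from the binomial theorem
\[
(1+a)^n = \sum_{k=0}^n \binom{n}{k} a^k,
\]
differentiate both sides in $a$ and multiply by $a$, obtaining
\[
\sum_{k=0}^n k \binom{n}{k} a^k = na(1+a)^{n-1}.
\]
Then I would differentiate this equation once more in $a$ and again multiply by $a$. The left-hand side becomes $\sum_{k=0}^n k^2 \binom{n}{k} a^k$, and the right-hand side becomes
\[
na(1+a)^{n-1} + n(n-1)a^2(1+a)^{n-2}.
\]

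The final step is to factor the right-hand side as $na(1+a)^{n-2}\bigl[(1+a) + (n-1)a\bigr] = na(na+1)(1+a)^{n-2}$, which is the stated closed form. The only mildly delicate point is interpreting $(1+a)^{n-2}$ when $n \in \{0,1\}$; but since $a \geq 0$ the factor $na$ vanishes for $n=0$ and cancels the would-be singularity for $n=1$, so the formula is valid in the two remaining edge cases by direct inspection. (If preferred, one can instead give a purely combinatorial derivation via $k^2 \binom{n}{k} = n(n-1)\binom{n-2}{k-2} + n\binom{n-1}{k-1}$ and the binomial theorem, which avoids any analytic manipulation altogether.) There is no substantive obstacle here — the calculation is routine once one commits to the operator $a \frac{d}{da}$, and the main thing to get right is the algebraic factorization at the end.
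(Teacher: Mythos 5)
Your proof is correct, but it takes a different route from the paper. You apply the operator $a\,\tfrac{d}{da}$ twice to the binomial theorem and then factor, handling the $n\in\{0,1\}$ edge cases (where the exponent $n-2$ is negative) by noting that $a+1>0$ and checking directly; that is all fine and complete. The paper instead gives a weighted double-counting argument: the left-hand side counts pairs consisting of a subset $S\subseteq\{1,\dots,n\}$ and a function $f:\{0,1\}\to S$, each pair weighted by $a^{|S|}$, and the right-hand side arises from choosing $f$ first and splitting into the injective case ($n(n-1)$ choices, each contributing $a^2(a+1)^{n-2}$) and the non-injective case ($n$ choices, each contributing $a(a+1)^{n-1}$). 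Your parenthetical alternative via $k^2\binom{n}{k} = n(n-1)\binom{n-2}{k-2} + n\binom{n-1}{k-1}$ is essentially the algebraic shadow of that same double count, so you in fact sketched both routes. The analytic approach is shorter to execute mechanically but requires the brief edge-case discussion you included; the combinatorial approach avoids differentiation and negative exponents entirely and makes the weight structure transparent, at the cost of setting up the counting interpretation. Either proof is acceptable.
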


\begin{proof}
  We apply double counting.
  The left hand side counts the ways of choosing a subset $S \subset \{1, \ldots, n\}$ and then a function $f : \{0,1\} \to S$, where each choice is given the weight $a^{|S|}$.
  
  We can also choose first the function $f : \{0,1\} \to \{1, \ldots, n\}$ and then the remaining elements of $S$.
  There are $n (n-1)$ choices for an injective $f$, and each contributes $\sum_{m=0}^{n-2} a^{n-m} \binom{n-2}{m} = a^2 (a+1)^{n-2}$ to the total weight.
  For a non-injective $f$, there are $n$ choices, and each contributes $\sum_{m=0}^{n-1} a^{n-m} \binom{n-1}{m} = a (a+1)^{n-1}$ to the total weight.
  Thus the total weight is
  \[
    n(n-1) \cdot a^2 (a+1)^{n-2} + n \cdot a (a+1)^{n-1} = n a (n a + 1) (a + 1)^{n-2},
  \]
  which is exactly the right hand side.
  \qed
\end{proof}

\begin{theorem}
  \label{thm:OneDomination}
  If $F$ is a surjective CA of radius at most $r$, then $C^A_A(F,r,1) \leq C^A_A(\ID,r,1)$, and hence $\bar{C}^A_A(F) \leq \bar{C}^A_A(\ID) = |A|$.
\end{theorem}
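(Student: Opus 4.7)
The plan is to translate the statement into a covariance bound under the uniform measure. Set $p = |A|/|\Sigma|$, $q = 1-p$, and for $w \in \Sigma^{r+1}$ drawn uniformly let $Y_i = \mathbf{1}_A(w_i)$ and $Y = \mathbf{1}_A(f(w))$. Then $C^A_A(F, r, 1) = |\Sigma|^{r+1}\, \mathbb{E}[|w|_A \cdot Y]$, and surjectivity of $F$ yields $\mathbb{E}[Y] = p$ and $\mathrm{Var}(Y) = pq$. Writing $|w|_A = \sum_{i=0}^{r} Y_i$ and using independence of the $Y_i$'s under the uniform measure, the target $C^A_A(F, r, 1) \leq C^A_A(\ID, r, 1) = |A||\Sigma|^r + r|A|^2|\Sigma|^{r-1}$ becomes
\[
\sum_{i=0}^{r} \mathrm{Cov}(Y_i, Y) \leq pq.
\]
For the identity ($f(w) = w_0$) we have $Y = Y_0$, and the covariance sum collapses to the single term $\mathrm{Cov}(Y_0, Y) = \mathrm{Var}(Y_0) = pq$, so equality is attained.

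The bulk of the proof is to bound this covariance sum for a general surjective $F$. Let $N_i(s, t) = |\{w : w_i = s, f(w) = t\}|$ and $M_i = \sum_{s, t \in A} N_i(s, t)$, so that $\mathrm{Cov}(Y_i, Y) = M_i/|\Sigma|^{r+1} - p^2$. The trivial positional bound $M_i \leq |A||\Sigma|^r$ (coming from $\sum_s N_i(s, t) = |\Sigma|^r$ via surjectivity, or $\sum_t N_i(s, t) = |\Sigma|^r$ trivially) is too loose once summed over $i$; worse, local symbol balance alone is strictly too weak. For instance with $|\Sigma| = 2$, $r = 2$, $A = \{0\}$, assigning $f^{-1}(0) = \{000, 001, 010, 100\}$ (the $|\Sigma|^r$ words with largest $|w|_A$) and $f^{-1}(1)$ its complement gives a locally balanced but globally non-surjective rule with $C^A_A(f, 2, 1) = 9 > 8 = C^A_A(\ID, 2, 1)$. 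I therefore plan to exploit the full Hedlund balance $|F^{-1}(u)| = |\Sigma|^r$ for every finite word $u$. A convenient reformulation is
\[
\sum_{i=1}^{r} M_i = \sum_{v' \in \Sigma^r} |v'|_A \cdot n(v'), \qquad n(v') = |\{w_0 \in \Sigma : f(w_0 v') \in A\}|,
\]
which together with $M_0 \leq |A||\Sigma|^r$ reduces the task to bounding $\sum_{v'} |v'|_A\, n(v')$.

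The main obstacle is turning the balance property applied to length-$2$ image words, which yields the pair-level identity $\sum_{v' \in \Sigma^r} n(v') m(v') = |A|^2 |\Sigma|^r$ with $m(v') = |\{w_{r+1} \in \Sigma : f(v' w_{r+1}) \in A\}|$, into a bound on $\sum_{v'} n(v') |v'|_A$. The boundary quantities $n, m$ depend on $v'$ only through the behavior of $f$ at its endpoints, whereas $|v'|_A$ involves every coordinate, so the orthogonality of $n$ and $m$ (namely $\sum_{v'} n(v') m(v') = |\Sigma|^r\, \overline{n}\, \overline{m}$ with $\overline{n} = \overline{m} = |A|$) does not translate directly into the correlation bound we want. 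My plan is to combine this pair-level orthogonality with a Chebyshev-type rearrangement inequality to control the correlation of $n$ with $|v'|_A$, closing the gap using $M_0 \leq |A||\Sigma|^r$. The second-moment identity of Lemma~\ref{lem:WeightSquareSum} may enter through a Cauchy--Schwarz step, though I expect a cleaner direct argument to handle the order-$1$ case.
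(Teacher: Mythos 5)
Your reduction is sound as far as it goes: rewriting $C^A_A(F,r,1)=|\Sigma|^{r+1}\bigl((r+1)p^2+\sum_{i=0}^r \mathrm{Cov}(Y_i,Y)\bigr)$ and observing that the claim is equivalent to $\sum_{i=0}^r \mathrm{Cov}(Y_i,Y)\leq pq$ is correct, your counterexample showing that single-letter balance is strictly too weak is valid (that rule is indeed not balanced on length-$2$ images, hence not surjective), and the identities $\sum_{i\geq 1} M_i=\sum_{v'}|v'|_A\,n(v')$ and $\sum_{v'}n(v')m(v')=|A|^2|\Sigma|^r$ are correct consequences of the balance property. But the proof stops exactly where the theorem starts: the inequality itself is never established. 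The final paragraph is a plan, not an argument, and as sketched it cannot work. First, the phrase ``reduces the task to bounding $\sum_{v'}|v'|_A\,n(v')$'' suggests bounding that sum by its identity value $r|A|^2|\Sigma|^{r-1}$ separately from $M_0\leq|A||\Sigma|^r$; this is false already for the shift $\sigma$ (radius $1$, $f(w)=w_1$), where $n(v')=|\Sigma|\mathbf{1}_A(v')$ gives $\sum_{v'}|v'|_A\,n(v')=|A||\Sigma|>|A|^2$ while $M_0=|A|^2<|A||\Sigma|$; equality in the theorem is attained only through a trade-off between the two terms, so any correct argument must couple them. Second, the pair-level orthogonality concerns the correlation of $n$ with $m$ (two boundary quantities), and there is no monotone structure linking either of them to the interior quantity $|v'|_A$, so a ``Chebyshev-type rearrangement'' step has nothing to bite on; you give no indication of how it would produce the needed bound, and it is not even clear that balance restricted to images of length $\leq 2$ carries enough information to imply the theorem at all.

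For comparison, the paper's proof uses the balance property at \emph{all} image lengths: it applies $F$ to words of length $n+r$, shows the exact identity $C^A_A(F_n,r,1)=n|\Sigma|^{n-1}\bigl(C^A_A(F,r,1)+(n-1)|A|^2|\Sigma|^{r-1}\bigr)$ where $C^A_A(F_n,r,1)=\sum_{w\in\Sigma^{n+r}}|w|_A\,|F_n(w)|_A$, bounds the latter by Cauchy--Schwarz using the fact that each $v\in\Sigma^n$ has exactly $|\Sigma|^r$ preimages (here Lemma~\ref{lem:WeightSquareSum} supplies the second moments $\sum_v|v|_A^2$), and then lets $n\to\infty$; the bound is not sharp for any fixed $n$ but becomes sharp in the limit. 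If you want to salvage your approach, you would need either an analogous passage to long words or a genuinely new finite argument that couples $M_0$ with $\sum_{i\geq1}M_i$ using balance constraints beyond length $2$; as written, the central inequality $\sum_i\mathrm{Cov}(Y_i,Y)\leq pq$ remains unproven.
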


\begin{proof}
  For $n \geq 1$, denote by $F_n : \Sigma^{n+r} \to \Sigma^n$ the CA $F$ applied to words of length $n+r$.
  We define the $(A,A,r)$-correlation of $F_n$ as $C^A_A(F_n,r,1) = \sum_{w \in \Sigma^{n+r}} |w|_A \cdot |F_n(w)|_A$.
  We prove the desired bound by expressing $C^A_A(F_n,r,1)$ in terms of the correlation $C^A_A(F,r,1)$, proving an upper bound on $C^A_A(F_n,r,1)$ that depends on $n$, and letting $n$ go to infinity.
  
  For the first step, we define the set of words $W^i_n = \{ w \in \Sigma^{n+r} \mid F_n(w)_i \in A \}$.
  Since $r$ is the radius of $F$, a given word $w \in \Sigma^{n+r}$ belongs to $W^i_n$ if and only if $F(w_{[i,i+r]}) \in A$; the other coordinates of $w$ are irrelevant.
  We rewrite the definition of $C^A_A(F_n,r,1)$ as
  \begin{align*}
    C^A_A(F_n,r,1)
    & {} = \sum_{i=0}^{n-1} \sum_{w \in W^i_n} |w|_A \\
    & {} = \sum_{i=0}^{n-1} \sum_{w \in W^i_n} ( |w_{[0,i-1]}|_A + |w_{[i,i+r]}|_A + |w_{[i+r+1,n+r-1]}|_A ) \\
    & {} = \sum_{i=0}^{n-1} \left( |\Sigma|^{n-1} C^A_A(F,r,1) + |A| |\Sigma|^r |\Sigma|^{n-1} (n-1) \frac{|A|}{|\Sigma|} \right) \\
    & {} = n |\Sigma|^{n-1} (C^A_A(F,r,1) + (n-1) |A|^2 |\Sigma|^{r-1}).
  \end{align*}
  The second-to-last equality follows from two considerations.
  The first term of the inner sum comes from $|w_{[i,i+r]}|_A$ for $w \in W^i_n$.
  Each subword $w_{[i,i+r]}$ occurs exactly $|\Sigma|^{n-1}$ times in the sum, since we have $n-1$ ``irrelevant'' coordinates that do not affect the condition $w \in W^i_n$.
  The second term comes from $|w_{[0,i-1]}|_A + |w_{[i+r+1,n+r-1]}|_A$ for $w \in W^i_n$, or in other words, the irrelevant coordinates.
  Each of the $|\Sigma|^{n-1}$ words over these coordinates occurs exactly $|F^{-1}(A)| = |A| |\Sigma|^r$ times, and their average $A$-weight is $(n-1) \frac{|A|}{|\Sigma|}$.
  With some rearraging, we obtain
  \begin{equation}
    \label{eq:LocalGlobal}
    C^A_A(F,r,1) = \frac{C^A_A(F_n,r,1)}{n |\Sigma|^{n-1}} - (n-1)|A|^2 |\Sigma|^{r-1}.
  \end{equation}
  
  Next, we prove the upper bound on $C^A_A(F_n,r,1)$.
  Define two integer vectors $\vec u_n = (|w|_A)_{w \in \Sigma^{n+r}}$ and $\vec v_n = (|F_n(w)|_A)_{w \in \Sigma^{n+r}}$ in $\R^{|\Sigma|^{n+r}}$, with the words in $\Sigma^{n+r}$ enumerated in some consistent order.
  It is immediate from the definition that $C^A_A(F_n,r,1) = \vec u_n \cdot \vec v_n$.
  Since $F_n$ is a balanced function, the weight of each word $w \in \Sigma^n$ is enumerated in $\vec v_n$ exactly $|\Sigma|^r$ times.
  Using these facts and the Cauchy-Schwarz inequality, we compute
  \begin{equation}
    \label{eq:CS}
    C^A_A(F_n,r,1) \leq \| u_n \| \cdot \| v_n \| = \sqrt{\left( \sum_{w \in \Sigma^{n+r}} |w|_A^2 \right) \left( |\Sigma|^r \sum_{v \in \Sigma^n} |v|_A^2 \right)}.
  \end{equation}
  Consider the rightmost sum $s = \sum_{v \in \Sigma^n} |v|_A^2$.
  Grouping the words by their $A$-weight and denoting $B = \Sigma \setminus A$, we have
  \begin{align*}
  s
  & {} = \sum_{k=0}^n k^2 |A|^k |B|^{n-k} \binom{n}{k}
       = |B|^n \sum_{k=0}^n k^2 \left(\frac{|A|}{|B|}\right)^k \binom{n}{k} \\
  & {} = n |B|^n \frac{|A|}{|B|} \left(n \frac{|A|}{|B|} + 1 \right) \left(\frac{|A|}{|B|} + 1 \right)^{n-2}
       = n |A| ((n - 1) |A| + |\Sigma|) |\Sigma|^{n-2}
  \end{align*}
  by Lemma~\ref{lem:WeightSquareSum}.
  Applying this to both sums in~\eqref{eq:CS}, we obtain
  \begin{equation}
    \label{eq:CS2}
    C^A_A(F_n,r,1) \leq |A| |\Sigma|^{n+r-2} \sqrt{n(n+r)((n-1)|A| + |\Sigma|)((n+r-1)|A| + |\Sigma|)}.
  \end{equation}
  
  We now combine~\eqref{eq:LocalGlobal} and~\eqref{eq:CS2} to get an upper bound for $C^A_A(F,r,1)$, which holds for all $n \geq 1$.
  For the sake of notational convenience, we express the bound in terms of the quantity $C^A_A(F,r,1)/(|A| |\Sigma|^{r-1}) + |A|$:
  \begin{align*}
    \frac{C^A_A(F,r,1)}{|A| |\Sigma|^{r-1}} + |A|
    & {} \leq \sqrt{\frac{(n+r)((n-1)|A| + |\Sigma|)((n+r-1)|A| + |\Sigma|)}{n}} - n |A| \\
    & {} = \frac{(n+r)((n-1)|A| + |\Sigma|)((n+r-1)|A| + |\Sigma|) - |A|^2 n^3}{\sqrt{n(n+r)((n-1)|A| + |\Sigma|)((n+r-1)|A| + |\Sigma|)} + |A| n^2} \\
    & {} = \frac{2|A| ((r-1)|A| + |\Sigma|) n^2 + O(n)}{|A|n^2 (\sqrt{o(n) + 1} + 1)} \stackrel{n \to \infty}{\longrightarrow} (r-1)|A| + |\Sigma|
  \end{align*}
  The claim follows from this bound and Equation~\ref{eq:id}.
  \qed
\end{proof}

\begin{example}
  Theorem~\ref{thm:OneDomination} generally does not hold for non-surjective CA.
  Namely, let $A \subsetneq \Sigma$, and let $F$ be a radius-$r$ local rule whose image is contained in $A$.
  Its $(A,A,r)$-correlation is $C^A_A(F,r,1) = \sum_{w \in \Sigma^{r+1}} |w|_A = (r+1) |A| \cdot |\Sigma|^r$, which is strictly larger than $C^A_A(\ID,r, 1) = (|\Sigma| + r|A|)|A| \cdot |\Sigma|^{r-1}$.
\end{example}

\subsection{Upper Bound for High Enough Orders}

\begin{lemma}
\label{lem:PreimMeasure}
Let $F$ be a radius-$r$ CA on $\Sigma^\Z$, let $A \subseteq \Sigma$ and $0 < p < 1$, and let $\mu_{A,p}$ be the product measure with $\mu_{A,p}([a]) = p/|A|$ for all $a \in A$ and $\mu_{a,p}([b]) = (1-p)/(|\Sigma|-|A|)$ for all $b \notin A$.
Then we have
\begin{equation}
\label{eq:PreimMeasure}
  \sum_{a\in A}F\mu_{A,p}([a]) = \sum_{\ell = 0}^{r+1} p^{\ell} \left( \sum_{k=0}^{\ell} \frac{ (-1)^{\ell-k} \binom{r+1-k}{r+1-\ell} N^A_A(F,r,k) }{ |A|^k(|\Sigma|-|A|)^{r+1-k} } \right).
\end{equation}
\end{lemma}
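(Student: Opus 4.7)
The plan is to unfold the definition of $F\mu_{A,p}([a])$ in terms of preimages, group words by their $A$-count, and then expand $(1-p)^{r+1-k}$ as a polynomial in $p$ via the binomial theorem, finally re-indexing the double sum.

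First I would write
\[
\sum_{a \in A} F\mu_{A,p}([a]) = \mu_{A,p}\bigl(\{x \in \Sigma^\Z \mid F(x)_0 \in A\}\bigr) = \sum_{\substack{w \in \Sigma^{r+1} \\ f(w) \in A}} \mu_{A,p}([w]),
\]
where $f$ is the radius-$r$ local rule of $F$. Since $\mu_{A,p}$ is a product measure, for any $w \in \Sigma^{r+1}$ we have
\[
\mu_{A,p}([w]) = \left(\frac{p}{|A|}\right)^{|w|_A} \left(\frac{1-p}{|\Sigma|-|A|}\right)^{r+1-|w|_A}.
\]
Grouping the words $w \in f^{-1}(A)$ by the value $|w|_A = k$ and using the definition of $N^A_A(F,r,k)$, the sum becomes
\[
\sum_{a \in A} F\mu_{A,p}([a]) = \sum_{k=0}^{r+1} \frac{N^A_A(F,r,k)}{|A|^k(|\Sigma|-|A|)^{r+1-k}}\, p^k (1-p)^{r+1-k}.
\]

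Next I would expand $(1-p)^{r+1-k} = \sum_{j=0}^{r+1-k} (-1)^j \binom{r+1-k}{j} p^j$ by the binomial theorem and substitute. This gives a double sum in $k$ and $j$ whose monomial in $p$ is $p^{k+j}$. Setting $\ell = k+j$, so $j = \ell - k$ with $k \leq \ell \leq r+1$, and swapping the order of summation yields
\[
\sum_{a \in A} F\mu_{A,p}([a]) = \sum_{\ell=0}^{r+1} p^\ell \sum_{k=0}^{\ell} \frac{(-1)^{\ell-k} \binom{r+1-k}{\ell-k}\, N^A_A(F,r,k)}{|A|^k(|\Sigma|-|A|)^{r+1-k}}.
\]
Finally, the symmetry $\binom{r+1-k}{\ell-k} = \binom{r+1-k}{(r+1-k)-(\ell-k)} = \binom{r+1-k}{r+1-\ell}$ converts this expression into exactly the right-hand side of~\eqref{eq:PreimMeasure}.

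There is no real obstacle here: the proof is a direct computation. The only thing to watch is the bookkeeping on the binomial coefficients and making sure the range of the inner sum after re-indexing is correctly $0 \leq k \leq \ell$ (smaller values of $k$ are allowed because $\binom{r+1-k}{r+1-\ell}$ is automatically nonzero for $k \leq \ell$, and the original sum over $j \geq 0$ translates exactly to this).
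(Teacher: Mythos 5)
Your proof is correct and follows essentially the same route as the paper's: unfold $F\mu_{A,p}$ over preimage words, group by the $A$-count $k$ to get the histogram terms with factor $p^k(1-p)^{r+1-k}$, expand the binomial, and re-index to collect powers of $p$. The only cosmetic difference is that the paper expands in powers of $(-p)$ and finishes with the substitution $\ell \mapsto r+1-\ell$, whereas you collect $p^{k+j}$ directly and invoke the symmetry $\binom{r+1-k}{\ell-k}=\binom{r+1-k}{r+1-\ell}$; both bookkeepings are equivalent.
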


In particular, this implies that the $(A,A)$-histogram of a CA $F$ completely determines the values $\sum_{a\in A}F\mu_{A,p}([a])$ for $0 < p < 1$.

\begin{proof}
We compute
  \begin{align*}
      \sum_{a\in A}F\mu_{A,p}([a]) = {} & \sum_{a\in A}\sum_{w \in F^{-1}([a])} \prod_{i=0}^r \mu_{A,p}(w_i) \\
      {} = {} & \sum_{k = 0}^{r+1} \sum_{a\in A}\sum_{\substack{w \in F^{-1}([a]) \\ |w|_A = k}} \left( \frac{p}{|A|} \right)^k \left( \frac{1 - p}{|\Sigma|-|A|} \right)^{r+1-k} \\
      {} = {} & \sum_{k = 0}^{r+1}\frac{N^A_A(F,k)}{|A|^k (|\Sigma|-|A|)^{r+1-k}} \cdot p^k (1 - p)^{r+1-k} \\
      {} = {} & \sum_{k = 0}^{r+1}\frac{N^A_A(F,k)}{|A|^k (|\Sigma|-|A|)^{r+1-k}} \cdot p^k \sum_{\ell=0}^{r+1-k} \binom{r+1-k}{\ell} (-p)^{r+1-k-\ell} \\
      {} = {} & \sum_{\ell = 0}^{r+1} p^{r+1-\ell} \left( \sum_{k=0}^{r+1-\ell} \frac{ (-1)^{r+1-k-\ell} \binom{r+1-k}{\ell} N^A_A(F,k) }{ |A|^k (|\Sigma|-|A|)^{r+1-k} } \right).
  \end{align*}
  The change of variables $\ell \to r+1-\ell$ gives the claim.
  \qed
\end{proof}

\begin{theorem}
  \label{thm:HighDomination}
  If $F$ is a surjective CA of radius at most $r$, then $C^A_A(F,r,m) \leq C^A_A(\ID,r,m)$ and $\bar{C}^A_A(F,m) \leq \bar{C}^A_A(\ID,m)$ for large enough $m$ (depending on $r$).
\end{theorem}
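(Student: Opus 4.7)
The plan is to treat $C^A_A(F, r, m) = \sum_{k=0}^{r+1} k^m N^A_A(F, r, k)$ as the $m$-th power moment of the histogram, and exploit that for large $m$ this moment is controlled by the largest index $k$ on which the histogram is nonzero. The benchmark is the identity, for which a direct count gives $N^A_A(\ID, r, k) = \binom{r}{k-1}|A|^k(|\Sigma|-|A|)^{r+1-k}$ when $k \geq 1$, and $0$ when $k = 0$; in particular $N^A_A(\ID, r, r+1) = |A|^{r+1}$.

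The starting observation, requiring no surjectivity, is $N^A_A(F, r, r+1) = |f^{-1}(A) \cap A^{r+1}| \leq |A|^{r+1}$. I would then split into two cases. If the inequality is strict, the integer difference is at least $1$, and combining with the balance bound $\sum_{k \leq r} N^A_A(F, r, k) \leq |A| \cdot |\Sigma|^r$ gives
\[
C^A_A(\ID, r, m) - C^A_A(F, r, m) \geq (r+1)^m - 2 r^m |A| \cdot |\Sigma|^r,
\]
which is positive for all $m$ beyond a threshold $m_0(r, |\Sigma|, |A|)$, since $((r+1)/r)^m \to \infty$.

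If instead $N^A_A(F, r, r+1) = |A|^{r+1}$, meaning $f(A^{r+1}) \subseteq A$, the leading terms cancel and one must continue at $k = r$, aiming for $N^A_A(F, r, r) \leq r |A|^r (|\Sigma|-|A|)$, and iterate downward if further ties persist. This is the main obstacle: it needs surjectivity beyond single-symbol balance. The natural tool is the extended balance property $|F^{-1}(v)| = |\Sigma|^r$ for all $v \in \Sigma^\ast$ stated in Section~2; applied to $v \in A^n$, the Case~2 hypothesis forces $A^{n+r} \subseteq F^{-1}(A^n)$ while $|F^{-1}(A^n)| = |A|^n|\Sigma|^r$ is fixed, so the $A$-content profile of $F^{-1}(A^n) \setminus A^{n+r}$ is tightly constrained, and an averaging argument over the sliding $(r+1)$-windows of a preimage $w$ should extract the desired bound on $N^A_A(F, r, r)$. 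Since there are only finitely many surjective CA of radius $\leq r$ on the fixed alphabet $\Sigma$, taking the maximum of the thresholds arising in each case gives a single $m_0$ depending only on $r$.
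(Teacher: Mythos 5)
Your reduction of the problem is sound and matches the paper's first step: compare the two $(A,A,r)$-histograms from the top index downward, note that for large $m$ the $m$-th moment is dominated by the largest index where they differ, and handle the easy case where the identity strictly dominates at that index by a crude bound on the lower terms (your $(r+1)^m - 2r^m|A|\,|\Sigma|^r$ estimate is fine, and your formula $N^A_A(\ID,r,k)=\binom{r}{k-1}|A|^k(|\Sigma|-|A|)^{r+1-k}$ is correct). The genuine gap is your Case~2, which is exactly the point where surjectivity must enter beyond single-symbol balance: you need that at the first (i.e.\ highest) index $k_0$ where the histograms differ, necessarily $N^A_A(F,r,k_0) < N^A_A(\ID,r,k_0)$, and you only offer a sketch (``the $A$-content profile \ldots is tightly constrained, and an averaging argument \ldots should extract the desired bound''). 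Knowing $A^{n+r}\subseteq F_n^{-1}(A^n)$ and $|F_n^{-1}(A^n)|=|A|^n|\Sigma|^r$ constrains only the \emph{number} of preimages outside $A^{n+r}$, not the per-window $A$-content of those preimages; the sliding windows of a single preimage are strongly correlated, so it is not clear how to convert this global count into the local bound $N^A_A(F,r,r)\leq r|A|^r(|\Sigma|-|A|)$, and the further ``iterate downward if ties persist'' step is not specified at all (the hypothesis there is equality of several top histogram entries, which is combinatorially more delicate). That a purely counting proof is not routine is suggested by the paper itself, whose closing conjecture on prefix-sum domination of histograms is left open despite implying exactly this kind of statement.

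For comparison, the paper closes this gap with an entropy argument rather than counting: assuming $N^A_A(F,r,k_0) > N^A_A(\ID,r,k_0)$ at the maximal differing index, it evaluates the preimage mass of $A$ under the product measure $\mu_{A,1-p}$ (Lemma~\ref{lem:PreimMeasure}); since the histograms agree above $k_0$, the leading term as $p\to 0$ shows $(F\mu_{A,1-p})(A) > 1-p$, so $F\mu_{A,1-p}$ has single-letter marginals strictly more concentrated on $A$, hence strictly smaller entropy than $\mu_{A,1-p}$ (which is the maximal-entropy measure for its marginals), contradicting the fact that surjective CA preserve entropy. If you want to complete your proposal, you either need to carry out a genuine combinatorial argument for each level of ties (which appears to be open territory), or replace Case~2 by this measure-theoretic step.
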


\begin{proof}
  If $F$ and $\ID$ have the same $(A,A,r)$-histogram, then they have the same $(A,A,r)$-correlation of every order $m$.
  If not, take the maximal $k_0 \in \{0, \ldots, r+1\}$ for which $N^A_A(F,r,k_0) \neq N^A_A(\ID,r,k_0)$.
  If $N^A_A(F,r,k_0) < N^A_A(\ID,r,k_0)$, then
  \begin{align*}
    C^A_A(\ID,r, m) - C^A_A(F, r,m) = {} & \sum_{i=0}^{r+1} (N^A_A(\ID,r,i) - N^A_A(F,r,i)) i^m \\
    {} = {} & (N^A_A(\ID,r,k_0) - N^A_A(F,r,k_0)) k^m + O((k_0-1)^m),
  \end{align*}
  which is strictly positive for large enough $m$.

  It remains to prove that $N^A_A(F,r,k_0) < N^A_A(\ID,r,k_0)$.
  Suppose for a contradiction that $N^A_A(F,r,k_0) > N^A_A(\ID,r,k_0)$.
  Choose a small $p > 0$ and consider the measure $\mu_{A,1-p}$ of Lemma~\ref{lem:PreimMeasure}.
  From the proof of the Lemma applied to $F$ and $\ID$,
  \begin{align*}
  	& \mu_{A,1-p}(F^{-1}(A)) - \mu_{A,1-p}(A) \\
  	{} = {} & \sum_{k = 0}^{r+1}\frac{N^A_A(F,r,k) - N^A_A(\ID,r,k)}{|A|^k (|\Sigma|-|A|)^{r+1-k}} \cdot (1-p)^k p^{r+1-k} \\
  	{} = {} & \frac{N^A_A(F,r,k_0) - N^A_A(\ID,r,k_0)}{|A|^{k_0} (|\Sigma|-|A|)^{r+1-k_0}} \cdot (1-p)^{k_0} p^{r+1-k_0} + O(p^{r+1-k_0+1}).
  \end{align*}
  When $p$ is small enough (in particular $p < |A|/|\Sigma|$), this is strictly positive, so $q = (F \mu_{A,1-p})(A) > \mu_{A,1-p}(A) = 1-p$.
  For the entropies, this implies
  \begin{align*}
    h(F \mu_{A,p}) \leq {} & - q \log\frac{q}{|A|} - (1-q) \log \frac{1-q}{|\Sigma| - |A|} \\
    {} < {} & - (1-p) \log \frac{1-p}{|A|} - p \log \frac{p}{|\Sigma| - |A|} \\
    {} = {} & h(\mu_{A,1-p}).
  \end{align*}
  But a surjective CA must preserve entropy, a contradiction.
  \qed
\end{proof}

\subsection{State Conservation}

Histograms also provide a characterization for the existence of certain conserved quantities for surjective CA.
Recall that a \emph{conserved quantity} of radius $r$ for a CA $F$ is a function $\phi : \Sigma^{r+1} \to \R$ such that for all $p \geq 1$ and all spatially $p$-periodic configurations $x \in \Sigma^\Z$, we have $\sum_{i=0}^{p-1} \phi(x_{[i, i+r]}) = \sum_{i=0}^{p-1} \phi(F(x)_{[i, i+r]})$.
Conserved quantities have a connection to invariant measures, which manifests as follows in the case of surjective CA and radius-$0$ conserved quantities.

\begin{theorem}[Theorem~1 in~\cite{KaTa12}]
  \label{thm:Siamak}
  A surjective CA $F : \Sigma^\Z \to \Sigma^\Z$ preserves a full-support product measure $\mu$ if and only if it conserves the radius-$0$ quantity $\phi(a) = - \log \mu(a)$.
\end{theorem}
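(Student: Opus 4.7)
The plan is to reformulate both directions in terms of orbit-weights of the product measure. Setting $\pi(a) = \mu([a])$ and $\Phi(u) = \sum_{i} \phi(u_i)$ for any finite word $u$, the product structure gives $\mu([u]) = e^{-\Phi(u)}$, so the conservation hypothesis is precisely the family of identities $\mu([u]) = \mu([F_p(u)])$ for every $p \geq 1$ and $u \in \Sigma^p$, where $F_p(u) = F({}^\infty u^\infty)_{[0, p-1]}$ is the induced map on spatially $p$-periodic configurations. The proof must not rely on $F_p$ being a bijection on $\Sigma^p$, since this can fail even for surjective $F$.

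For the backward direction, fix $w \in \Sigma^n$ and choose $p \geq n + r$. Direct marginalization gives $\mu([w]) = \sum_{u \in \Sigma^p,\, u_{[0, n-1]} = w} \mu([u])$. A slightly more involved computation, using that for $p \geq n + r$ the first $n$ coordinates of $F_p(u)$ depend only on $u_{[0, n+r-1]}$ without wraparound, yields $F\mu([w]) = \sum_{u \in \Sigma^p,\, F_p(u)_{[0, n-1]} = w} \mu([u])$. Applying the conservation identity termwise to the second sum and then carrying out a counting argument that leverages the balance property $|F^{-1}(b)| = |\Sigma|^r$ of surjective CA to pair contributions with the first sum should close the equality.

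For the forward direction, assume $F\mu = \mu$. My strategy is to exploit that surjectivity of $F$ gives $h(F\mu) = h(\mu) = H(\pi)$, combined with the excerpt's observation that the product measure is the unique shift-invariant maximum-entropy measure with fixed single-site marginals. Supposing for contradiction that $\mu([u_0]) \neq \mu([F_p(u_0)])$ for some $p$ and $u_0 \in \Sigma^p$, I would construct a shift-invariant, $F$-invariant measure $\nu$ with the same single-site marginals as $\mu$ but strictly larger entropy by redistributing mass along the problematic $F_p$-orbit, contradicting the uniqueness. A less ad hoc route is to invoke the Gibbs equilibrium-state characterisation of $\mu$ together with the thermodynamic-formalism principle that a surjective CA preserving an equilibrium state conserves the defining potential.

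The principal obstacle is the forward direction. Extracting the pointwise finite-orbit identity from the global cylinder identity is delicate, and the entropy-tilt construction requires careful verification that the tilted measure remains $F$-invariant with the correct marginals, which is not automatic. The Gibbs-state fallback sidesteps this at the cost of a less self-contained argument.
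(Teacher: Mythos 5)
First, note that the paper does not prove this statement at all: it is imported as Theorem~1 of \cite{KaTa12}, so there is no internal proof to compare against and your attempt must stand on its own. Judged so, both directions have genuine gaps, and the forward one is fatal as planned. For ``invariance implies conservation'' your strategy is to assume $\mu([u_0]) \neq \mu([F_p(u_0)])$ and then build a shift-invariant, $F$-invariant measure $\nu$ with the same single-site marginals as $\mu$ but strictly larger entropy, contradicting uniqueness of the maximum-entropy measure. But no such $\nu$ can exist under any hypotheses --- that is exactly what uniqueness says --- so constructing one cannot be the engine of a contradiction, and nothing in the sketch explains why a failure of conservation at one periodic word would buy any entropy. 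Moreover, ``redistributing mass along the problematic $F_p$-orbit'' does not perturb $\mu$ at all: a full-support product measure on $\Sigma^\Z$ with $|\Sigma| \geq 2$ is nonatomic, so periodic orbits carry $\mu$-measure zero. The entropy-plus-uniqueness mechanism naturally yields the \emph{opposite} implication (conservation implies invariance), which is how the paper itself uses it in the proof of Theorem~\ref{thm:Conserving}; the direction you are after is precisely the part the paper outsources to \cite{KaTa12}, and your fallback --- ``a surjective CA preserving an equilibrium state conserves the defining potential'' --- is a restatement of that very theorem, hence circular.

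The backward direction (conservation implies invariance) also has a concrete hole where you say a counting argument ``should close the equality''. Applying the conservation identity termwise to $F\mu([w]) = \sum_{u \in \Sigma^p,\, F_p(u)_{[0,n-1]}=w}\mu([u])$ and regrouping by the image word gives $F\mu([w]) = \sum_{v \in \Sigma^p,\, v_{[0,n-1]}=w} |F_p^{-1}(v)|\,\mu([v])$, where $|F_p^{-1}(v)|$ counts \emph{$p$-periodic} preimages. This multiplicity varies with $v$, can be $0$ (for XOR the all-ones period-$1$ word has no period-$1$ preimage --- exactly the non-bijectivity of $F_p$ that you flag but never handle), and is not controlled by the balance property, which counts preimage words in $\Sigma^{p+r}$, not periodic preimages. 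Showing that these multiplicities, weighted by $\mu([v])$, average out correctly on every cylinder is essentially the content of the theorem, so as written this step is a gap rather than a routine pairing. This direction can instead be closed by the variational argument the paper uses in Theorem~\ref{thm:Conserving}: conservation gives $\int \phi\, d(F\mu) = \int \phi\, d\mu$ (via the standard characterization of conserved quantities through averages of shift-invariant measures), surjectivity gives $h(F\mu) = h(\mu)$, and every shift-invariant $\nu$ satisfies $h(\nu) \leq \int \phi\, d\nu$ with equality only for $\nu = \mu$, whence $F\mu = \mu$; but that is a different proof from the finitary one you propose.
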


\begin{theorem}
  \label{thm:Conserving}
  Let $F$ be a CA of radius at most $r$, and $A \subseteq \Sigma$.
  If $F$ conserves the total number of $A$-symbols, then $H^A_A(F,r) = H^A_A(\ID,r)$.
  The converse holds if $F$ is surjective.
\end{theorem}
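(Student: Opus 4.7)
The plan is to handle the two implications via complementary techniques, both routed through Lemma~\ref{lem:PreimMeasure}. For the forward direction, fix $p \in (0,1)$ and, for $n \geq r+1$, consider the shift-invariant measure $\tilde{\mu}_{A,p,n}$ on $\Sigma^{\Z}$ supported on spatially $n$-periodic configurations, obtained by drawing $(x_0,\dots,x_{n-1})$ according to $\mu_{A,p}$ restricted to $\Sigma^n$ and extending periodically. A direct computation shows that the length-$k$ marginals of $\tilde\mu_{A,p,n}$ agree with those of $\mu_{A,p}$ for every $k \leq n$, so, writing $|x|_A$ for the count over one period, $E_{\tilde\mu}[|x|_A]=np$ and, by shift-invariance applied to each window, $E_{\tilde\mu}[|F(x)|_A]=n\cdot F\mu_{A,p}([A])$. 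Conservation of the number of $A$-symbols on periodic configurations forces $|x|_A=|F(x)|_A$ pointwise on the support of $\tilde\mu_{A,p,n}$, so $F\mu_{A,p}([A])=p$ for every $p\in(0,1)$. Lemma~\ref{lem:PreimMeasure} writes $\sum_{a\in A}F\mu_{A,p}([a])$ as a polynomial in $p$ whose coefficient vector is an upper-triangular, invertible linear transform of the histogram $H^A_A(F,r)$; since this polynomial equals $p$, which is what Lemma~\ref{lem:PreimMeasure} also produces for $\ID$, we conclude $H^A_A(F,r)=H^A_A(\ID,r)$.

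For the converse, assume $F$ is surjective and $H^A_A(F,r)=H^A_A(\ID,r)$. Running Lemma~\ref{lem:PreimMeasure} in the other direction yields $F\mu_{A,p}([A])=p$ for all $p\in(0,1)$. I then intend to upgrade this to the stronger statement $F\mu_{A,p}=\mu_{A,p}$ by an entropy squeeze built from three bounds: surjectivity gives $h(F\mu_{A,p})=h(\mu_{A,p})$; the general inequality $h(\nu)\leq H_1(\nu)$ holds for every shift-invariant $\nu$; and Jensen's inequality, applied separately to the $A$-part and $(\Sigma\setminus A)$-part of the $1$-block marginals of $F\mu_{A,p}$ under the constraint $\sum_{a\in A}F\mu_{A,p}([a])=p$, gives $H_1(F\mu_{A,p})\leq h(\mu_{A,p})$ with equality iff the $1$-block marginals of $F\mu_{A,p}$ are those of $\mu_{A,p}$. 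Equality throughout therefore forces both matching of $1$-block marginals and $h(F\mu_{A,p})=H_1(F\mu_{A,p})$, the latter being the standard criterion for $F\mu_{A,p}$ to be a product measure. Hence $F\mu_{A,p}=\mu_{A,p}$ for every $p$. Picking any $p\neq|A|/|\Sigma|$, the function $-\log\mu_{A,p}$ is a non-constant affine function of the indicator of $A$; Theorem~\ref{thm:Siamak} then gives that $F$ conserves $-\log\mu_{A,p}$, and by additivity and scaling invariance of the conserved-quantity condition, $F$ conserves the number of $A$-symbols.

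The main technical point is the entropy squeeze in the converse, which requires two distinct inequalities (\emph{viz.} $h \leq H_1$ for shift-invariant measures, and the Jensen bound on $H_1$) to saturate simultaneously in order to pin $F\mu_{A,p}$ down to $\mu_{A,p}$ rather than merely some measure with the correct total $A$-mass. The forward direction is essentially computational; the only observation needed is that periodized Bernoulli sampling turns the pointwise conservation condition into a single linear identity in $p$, which combined with Lemma~\ref{lem:PreimMeasure} determines the histogram.
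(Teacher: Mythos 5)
Your proposal is correct and takes essentially the same route as the paper: the forward direction goes through Lemma~\ref{lem:PreimMeasure} and a triangular/invertible coefficient comparison of the resulting polynomial in $p$ (the paper uses a single transcendental $p$ where you use the identity for all $p$, and the paper simply asserts $F\mu_{A,p}([A])=p$ where you justify it via periodized Bernoulli configurations), and the converse uses exactly the paper's chain: histogram equality gives $F\mu_{A,p}([A])=p$, surjectivity preserves entropy, $\mu_{A,p}$ is the unique maximal-entropy measure with prescribed $A$-mass (your $h\leq H_1$ plus Jensen squeeze is the proof of this fact, which the paper cites directly), hence $F\mu_{A,p}=\mu_{A,p}$, and Theorem~\ref{thm:Siamak} with $p\neq|A|/|\Sigma|$ yields conservation of the number of $A$-symbols.
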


\begin{proof}
  Suppose first that $F$ conserves the number of $A$-symbols.
  Choose a transcendental $0 < p < 1$ and let $\mu_{A,p}$ be the measure of Lemma~\ref{lem:PreimMeasure}.
  Then we have $\mu_{A,p}(F^{-1}(A)) = p$.
  Equation~\eqref{eq:PreimMeasure} now becomes
  \[
  	p = \mu_{A,p}(F^{-1}(A)) = \sum_{\ell = 0}^{r+1} p^{\ell} \left( \sum_{k=0}^{\ell} \frac{ (-1)^{\ell-k} \binom{r+1-k}{r+1-\ell} N^A_A(F,r,k) }{ |A|^k (|\Sigma|-|A|)^{r+1-k} } \right).
  \]
  This is a polynomial equation in $p$, and since $p$ is transcendental, the equation must be trivial: the coefficient of $p^{r+1-\ell}$ in the last sum must equal $1$ when $\ell = 1$, and $0$ otherwise.
  The case $\ell = 0$ fixes the value of $N^A_A(F,r,0)$, then $\ell = 1$ fixes the value of $N^A_A(F,r,1)$, and so on up to $N^A_A(F,r+1)$.
  Hence all $A$-preserving CA of radius at most $r$, including $\ID$, have the same $(A,A,r)$-histogram.

  Suppose then that $F$ is surjective and $H^A_A(F,r) = H^A_A(\ID,r)$, and let $0 < p < 1$ be arbitrary.
  From Lemma~\eqref{lem:PreimMeasure} we obtain $F \mu_{A,p}(A) = \ID \mu_{A,p}(A) = \mu_{A,p}(A) = p$.
  Since $F$ is surjective, it preserves entropy, so $h(F \mu_{A,p}) = h(\mu_{A,p})$.
  But $\mu_{A,p}$ is the unique measure of maximum entropy among all measures $\mu$ with $\mu(A) = p$.
  Hence $F$ preserves $\mu_{A,p}$.
  Theorem~\ref{thm:Siamak} implies that $F$ conserves the number of $A$-symbols.
  \qed
\end{proof}

\begin{example}
    The second claim of Theorem~\ref{thm:Conserving} does not hold for non-surjective cellular automata.
    Namely, consider the radius-$1$ CA $F$ on $\{0,1,2\}^\Z$ defined by the local rule
    \[
    f(a,b) =
    \begin{cases}
        0, & \text{if~} (a,b) = (1,0), \\
        1, & \text{if~} (a,b) = (0,1), \\
        a, & \text{otherwise.}
    \end{cases}
    \]
    We have $H^0_0(F,1) = (0,2,1) = H^0_0(\ID, 1)$, but $F$ does not conserve the total number of $0$-symbols, since $F({}^\infty (012)^\infty) = {}^\infty (112)^\infty$.
\end{example}

\section{Open Questions}


We state the following questions mainly for $F$ the two-neighbor XOR automaton, although they are of interest for other surjective CA as well.

\begin{question}
Can we find a weakly mixing shift-invariant measure $\mu$  such that $F^n(\mu) \overset{n}{\longrightarrow} \delta_0$?
\end{question}

\begin{question}
Starting from an ergodic shift-invariant measure $\mu$, what can be said about the set of weak-* limit points of $F^n(\mu)$?
\end{question}

For the previous question, even the case where $\mu$ is a non-uniform Bernoulli measure is still largely open. Since measure-theoretical entropy is upper semicontinuous with respect to weak-* convergence, any weak-* limit point must have positive entropy, so in particular $F^n\mu \overset{n \to \infty}{\longrightarrow} \delta_0$ is not possible, but we do not know whether it is possible to find a word $w$ such that $(F^n\mu)([w]) \overset{n \to \infty}{\longrightarrow} 0$.

For a computability point of view,  we can see that any complexity in the limit measure in Theorem~\ref{thm:XORall1} was present in the initial measure and the CA itself is not doing any significant computation. Is this a restriction caused by surjectivity?

\begin{question}
Starting from a computable shift-invariant measure $\mu$, is it possible to have an uncomputable weak*-limit point for $F^n(\mu)$? Alternatively, is it possible that the $\mu$-limit set $\{ w \in \Sigma^\ast \mid F^n([w])\centernot\longrightarrow 0\}$ is uncomputable?
\end{question}

Based on computer searches, it seems likely that the gap between Theorem~\ref{thm:OneDomination} and Theorem~\ref{thm:HighDomination} can be closed, at least in the binary case.

\begin{conjecture}[Domination of prefix sums]
  For a surjective binary CA $F$ with radius $r$ and $0 \leq n \leq r+1$, we have
  \[ 
    \sum_{k = 0}^n N^1_1(F,r,k) \geq \sum_{k = 0}^n N^1_1(\ID,r,k).
  \]
\end{conjecture}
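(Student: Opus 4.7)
The plan is to recast the conjecture as a stochastic-dominance statement and attack it by combining entropy preservation with pre-injectivity. Writing $a_k = N^1_1(F,r,k)$ and $b_k = N^1_1(\ID,r,k) = \binom{r}{k-1}$, both sequences sum to $2^r$ by balance, and the conjecture is equivalent to saying that the distribution of $|W|_1$ for $W$ uniform in $f^{-1}(1)$ is stochastically dominated by $1 + \mathrm{Binom}(r,1/2)$, the analogous distribution on $\ID^{-1}(1) = \{1v : v \in \{0,1\}^r\}$. By Hall's theorem this is also equivalent to the existence of a weight-non-increasing bijection $f^{-1}(1) \to \ID^{-1}(1)$. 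Theorems~\ref{thm:OneDomination} and~\ref{thm:HighDomination} already yield the comparisons $\sum_k k^m a_k \leq \sum_k k^m b_k$ for $m = 1$ and for all $m$ large enough, so the conjecture fills the gap by covering every non-decreasing test function simultaneously.

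The first ingredient would be the family of Bernoulli probes $\mu_p$. The polynomial $\phi_F(p) := F\mu_p([1]) = \sum_{k=0}^{r+1} a_k p^k (1-p)^{r+1-k}$ has degree at most $r+1$, satisfies $\phi_F(1/2) = 1/2$ by balance, and obeys $H(\phi_F(p)) \geq H(p)$ for every $p \in [0,1]$ by entropy preservation, which is equivalent to $|\phi_F(p) - 1/2| \leq |p - 1/2|$. Substituting $p = q/(1+q)$ converts the conjecture into coefficient-wise prefix-sum dominance of $P_F(q) := (1+q)^{r+1}\phi_F(q/(1+q)) = \sum_k a_k q^k$ over $P_\ID(q) = q(1+q)^r$. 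The first step of the attack is to determine whether these Bernstein--entropy constraints on $\phi_F$, together with the boundary values $\phi_F(0), \phi_F(1) \in \{0,1\}$ forced by surjectivity on Dirac measures, already imply the conjectured polynomial dominance.

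Expecting scalar entropy to be insufficient, the next ingredient is pre-injectivity: every surjective one-dimensional CA is pre-injective (Moore--Myhill), a genuinely combinatorial constraint on $f^{-1}(1)$ that the entropy inequality cannot see. For instance, the prefix-sum bound at $n = r-1$, namely $a_r + a_{r+1} \leq r+1$, is immediate: if $1^{r+1}$ and all weight-$r$ words lay in $f^{-1}(1)$, then $1^{\Z}$ together with every single-bit flip of $1^{\Z}$ would all map to $1^{\Z}$, contradicting injectivity on finite differences. The plan is to systematize this into a downward induction on $n$, controlling the tail $\sum_{k \geq n} a_k$ through ``forbidden preimage-configuration'' arguments using configurations differing from $1^{\Z}$ in at most $r+1-n$ positions, and supplementing with the block-entropy equalities for the longer local rules $F_n$ via the local-global formula of Equation~\ref{eq:LocalGlobal}.

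The main obstacle is the jump from single-coefficient or single-moment control to simultaneous control of every prefix sum. The argument inside the proof of Theorem~\ref{thm:HighDomination} only pins down the top-most differing entry of the histogram, and iterating it is not automatic, because ``removing the top entry'' does not correspond to any legal modification of $F$. The binary-only scope of the conjecture is suggestive — binary surjective CA carry a rigid De Bruijn-graph structure — and I expect that some specifically binary structural input will be needed to close the induction beyond the cases that entropy and pre-injectivity can reach on their own.
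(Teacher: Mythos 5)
First, a point of calibration: the paper does not prove this statement. It is stated as a conjecture, supported only by computer searches, precisely because the authors' Theorems~\ref{thm:OneDomination} and~\ref{thm:HighDomination} cover only the first moment and all sufficiently high moments; so there is no paper proof to compare against, and your submission must be judged as a standalone argument. As such it is a plan of attack, not a proof. The pieces you do verify are correct: $N^1_1(\ID,r,k)=\binom{r}{k-1}$, both histograms sum to $2^r$ by balance, the prefix-sum inequalities are equivalent to stochastic dominance of the weight distribution on $f^{-1}(1)$ by $1+\mathrm{Binom}(r,1/2)$ (though the Hall bijection should be weight \emph{non-decreasing} from $f^{-1}(1)$ to $\ID^{-1}(1)$, or non-increasing in the reverse direction — a minor slip), the Bernoulli probe identity $F\mu_p([1])=\sum_k a_k p^k(1-p)^{r+1-k}$ combined with entropy preservation does give $|\phi_F(p)-1/2|\leq|p-1/2|$, which is exactly the mechanism already used in the proof of Theorem~\ref{thm:HighDomination}, and your Garden-of-Eden argument correctly yields $a_r+a_{r+1}\leq r+1$, i.e.\ the case $n=r-1$ (the cases $n=r$ and $n=r+1$ being trivial).

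The genuine gap is that everything beyond these endpoint cases is missing, and it is the entire content of the conjecture. The ``downward induction on $n$'' is only named, never carried out: already at $n=r-2$ you must exclude on the order of $\binom{r+1}{2}$ weight-$(r-1)$ words from $f^{-1}(1)$, and flipping two bits of $1^\Z$ does not obviously produce the required pre-injectivity contradiction once only \emph{some} high-weight words map to $1$; you give no counting that closes this, nor any argument that the scalar constraint $|\phi_F(p)-1/2|\leq|p-1/2|$ — a condition on the values of a single polynomial — can determine all $r+2$ prefix sums. As you yourself observe, the entropy mechanism of Theorem~\ref{thm:HighDomination} pins down only the topmost differing histogram entry and cannot be iterated, because deleting that entry does not correspond to modifying any actual CA. So the submission is a sensible research program that re-derives consequences already within reach of the paper's tools, but the statement itself remains unproved, exactly as it does in the paper.
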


This would imply $C^1_1(F,r,m) \leq C^1_1(\ID,r,m)$ for all positive integers $m$.

\begin{credits}
\subsubsection{\ackname} This work started during a stay of the first author at Turku University, and Benjamin Hellouin is grateful to Jarkko Kari for the invitation (and the orienteering). Ilkka Törmä was supported by the Academy of Finland under grant 359921.

\subsubsection{\discintname}
The authors have no competing interests to declare that are relevant to the content of this article.
\end{credits}

\bibliographystyle{splncs04}
\bibliography{discrepancybib}
\end{document}